\newtheorem{theorem}{Theorem}[section]
\newtheorem{lem}[theorem]{Lemma}
\newtheorem{prop}[theorem]{Proposition}
\newtheorem{dfn}[theorem]{Definition}
\newtheorem{rem}[theorem]{Remark}
\begin{document}
\title{Note on a Geometric Isogeny of K3 Surfaces}
\author{Adrian Clingher
\thanks{
Department of Mathematics and Computer Science, University of Missouri-St. Louis, St. Louis  MO 63108. {\bf e-mail:} {\it clinghera@umsl.edu}
}
\and
Charles F. Doran\thanks{
Department of Mathematical and Statistical Sciences, University of Alberta. Edmonton AB T6G 2G1. {\bf e-mail:} {\it doran@math.ualberta.ca}
}
}
\date{}
\maketitle
\begin{center}
\abstract{
\noindent The paper establishes a correspondence relating two specific classes of complex algebraic K3 surfaces. The first class 
consists of K3 surfaces polarized by the rank-sixteen lattice ${\rm H} \oplus {\rm E}_7 \oplus {\rm E}_7 $. 
The second class consists of K3 surfaces obtained as minimal resolutions of double covers of the projective plane branched 
over a configuration of six lines. The correspondence underlies a geometric two-isogeny of K3 surfaces.  
}
\end{center}

\section{Geometric Two-Isogenies on K3 Surfaces}
\label{isogenydef}
Let ${\rm X}$ be an algebraic K3 surface defined over the field of complex numbers. A {\it Nikulin (or symplectic) involution} on ${\rm X}$ is an analytic 
automorphism of order two $\Phi \colon {\rm X} \rightarrow {\rm X} $ such that $ \Phi^*(\omega)=\omega$ 
for any holomorphic two-form $\omega$ on ${\rm X}$. This type of involution has many
interesting properties (see \cite{morrison, nikulin2}), amongst which the most important are: (a) the fixed locus of 
$\Phi$ consists of precisely eight distinct points, and (b) the surface ${\rm Y}$ obtained as the minimal resolution of the quotient ${\rm X}/\Phi$ is a K3 surface.
Equivalently, one can construct ${\rm Y}$ as follows. Blow up the eight fixed points on ${\rm X}$ obtaining a new surface
$\widetilde{{\rm X}}$. The Nikulin involution $\Phi$ extends to an involution $ \widetilde{\Phi}$ on $\widetilde{{\rm X}}$ which has as fixed locus
the disjoint union of the eight resulting exceptional curves. The quotient $\widetilde{{\rm X}} / \widetilde{\Phi}$ is smooth and recovers the
surface ${\rm Y}$ from above.
\par In the context of the above construction, one has a degree-two rational map ${\rm p}_{\Phi} \colon {\rm X} \dashrightarrow {\rm Y}$ with a branch locus given by 
eight disjoint rational curves (the even eight configuration in the sense of Mehran \cite{afsaneh}). In addition, there is a push-forward morphism (see \cite{shiodainose,morrison})
\begin{equation}
\label{morphh}
({\rm p}_{\Phi})_* \colon {\rm H}^2({\rm X}, \mathbb{Z}) \rightarrow {\rm H}_{{\rm Y}}
\end{equation}
mapping into the orthogonal complement in ${\rm H}^2({\rm Y}, \mathbb{Z})$ of the even eight curves. The
metamorphosis of the surface ${\rm X}$ into ${\rm Y}$ is referred to in the literature as the {\it Nikulin
construction}.
\par The most well-known class of Nikulin involutions is given by the {\it Shioda-Inose structures} \cite{shiodainose,morrison,nikulin2}.
These consist of Nikulin involutions that satisfy two additional requirements. The first condition asks for the surface ${\rm Y}$ to be Kummer.
The second requirement asserts that the morphism $(\ref{morphh})$ induces a Hodge isometry between the lattices of transcendental cocycles $ {\rm T}_{{\rm X}}(2) $ and ${\rm T}_{{\rm Y}} $. An effective criterion for a particular
K3 surface ${\rm X}$ to admit a Shioda-Inose structure was given by Morrison \cite{morrison}.
\par In this paper, we shall work with another class of Nikulin involutions: fiber-wise translations by a section of order two in a jacobian 
elliptic fibration. This class of involutions was discussed by Van Geemen and Sarti \cite{sarti1}. Let us be precise:
\begin{dfn}
\label{geemensartidef}
A {\em Van Geemen-Sarti involution} is an automorphism $\Phi_{{\rm X}} \colon {\rm X} \rightarrow {\rm X} $ for which there exists
a triple $(\varphi_{{\rm X}}, S_1, S_2)$ such that:
\begin{itemize}
\item [(a)] $\varphi_{{\rm X}} \colon {\rm X} \rightarrow {\mathbb P}^1$ is an elliptic fibration on ${\rm X}$,
\item [(b)] ${\rm S}_1$ and ${\rm S}_2$ are disjoint sections of $\varphi_{{\rm X}}$,
\item [(c)] ${\rm S}_2$ is an element of order two in the Mordell-Weil group ${\rm MW}(\varphi_{{\rm X}}, {\rm S}_1)$,
\item [(d)] $\Phi_{{\rm X}}$ is the involution obtained by extending the fiber-wise translations by ${\rm S}_2$ in the smooth fibers of
$\varphi_{{\rm X}}$ using the group structure with neutral element given by ${\rm S}_1$.
\end{itemize}
Under the above conditions, one says that the triple $(\varphi_{{\rm X}}, S_1, S_2)$ is {\em compatible} with the involution $\Phi_{{\rm X}}$.
\end{dfn}
\noindent Any given Van Geemen-Sarti involution is, in particular, a Nikulin involution. One can naturally
regard a Van Geemen-Sarti involution $\Phi_{{\rm X}}$ as a fiber-wise two-isogeny between the
original K3 surface ${\rm X}$ and the newly constructed K3 surface ${\rm Y}$. Since $\Phi_{{\rm X}}$ acts
as a translation by an element of order two in each of the
smooth fibers of $\varphi_{{\rm X}}$, there is a canonically induced elliptic fibration
$\varphi_{{\rm Y}} \colon {\rm Y} \rightarrow {\mathbb P}^1$. The new fibration $\varphi_{{\rm Y}}$ carries two special
sections, ${\rm S}'_1$ and ${\rm S}'_2$, as follows. The section ${\rm S}'_1$
is the image under the map ${\rm p}_{\Phi_{{\rm X}}}$ of the two sections ${\rm S}_1$ and ${\rm S}_2$ of $\varphi$. The section ${\rm S}'_2$
is the image under ${\rm p}_{\Phi_{{\rm X}}}$ of the divisor on ${\rm X}$ obtained by compactifying the curve obtained by taking the union of
remaining two order-two points in the smooth fibers of $\varphi_{{\rm X}}$. The two sections ${\rm S}'_1$ and ${\rm S}'_2$ are disjoint
and ${\rm S}'_2$ represents an element
of order two in the Mordell-Weil group ${\rm MW}(\varphi_{{\rm Y}}, {\rm S}'_1)$. Then, by standard results \cite{shioda2}, the 
fiber-wise translations by the order-two section ${\rm S}'_2$ extend to determine an involution
$ \Phi_{{\rm Y}} \colon {\rm Y} \rightarrow {\rm Y}$ which is a Van Geemen-Sarti
involution on ${\rm Y}$.
\par The same procedure applied initially to the involution $\Phi_{{\rm Y}}$ recovers the
K3 surface ${\rm X}$ together with the triple $(\varphi_{{\rm X}}, {\rm S}_1, {\rm S}_2)$ and the involution $\Phi_{{\rm X}}$. One has therefore the
following commutative diagram.
\begin{equation}
\label{chartt1}
\xymatrix 
{
{\rm Y} \ar @(dl,ul) _{\Phi_{{\rm Y}}} \ar [dr] _{\varphi_{{\rm Y}}} \ar @/_0.5pc/ @{-->} _{{\rm p}_{\Phi_{{\rm Y}}}} [rr]
&
& {\rm X} \ar @(dr,ur) ^{\Phi_{{\rm X}}} \ar [dl] ^{\varphi_{{\rm X}}} \ar @/_0.5pc/ @{-->} _{{\rm p}_{\Phi_{{\rm X}}}} [ll] \\
& \mathbb{P}^1 & \\
}
\end{equation}
The rational maps ${\rm p}_{\Phi_{{\rm X}}}$ and ${\rm p}_{\Phi_{{\rm Y}}}$ are of degree two. Hence,
$({\rm p}_{\Phi_{{\rm X}}}, {\rm p}_{\Phi_{{\rm Y}}})$ can be be seen as forming {\it a pair of dual two-isogenies}\footnote{
The rational maps ${\rm p}_{\Phi_{{\rm X}}}$ and ${\rm p}_{\Phi_{{\rm Y}}}$ are not isogenies in the traditional sense 
(finite and etale morphism). The first author would like to thank Mohan Kumar for pointing out this fact.   
} between the
surfaces ${\rm X}$ and ${\rm Y}$.
%
%
%
%
\par Note that, by standard results \cite{clingher3, kondo, shapiro, shioda2} on elliptic fibrations on K3 surfaces, 
once a K3 surface ${\rm X}$ is endowed with an 
elliptic fibration $\varphi_{{\rm X}} \colon {\rm X} \rightarrow \mathbb{P}^1$ with two disjoint sections $S_1$ and $S_2$, the condition for the 
triple $(\varphi_{{\rm X}}, S_1, S_2)$ to define a Van Geemen-Sarti involution can be formulated entirely in terms of cohomology. One first considers 
the cohomology class $F$ of the fiber of $\varphi_{{\rm X}}$ as well as the class of $S_1$. These classes span a primitive lattice embedding 
${\rm H} \hookrightarrow {\rm NS}({\rm X})$. In fact, the Neron-Severi lattice factors into an orthogonal direct product 
\begin{equation}
\label{nssplit}
{\rm NS}({\rm X}) \ = \ {\rm H} \oplus \mathcal{W} 
\end{equation} 
where $\mathcal{W}$ is a negative definite lattice of rank ${\rm p}_{{\rm X}}-2$. 
Denote by $\mathcal{W}_{{\rm root}}$ the sub-lattice spanned by the roots of $\mathcal{W}$. This sub-lattice is actually spanned by the irreducible 
components of the singular fibers of $\varphi_{{\rm X}}$ not meeting $S_1$. As proved by Shioda \cite{shioda2}, one has then an isomorphism of abelian 
groups:
\begin{equation}
{\rm MW}(\varphi, S_1) \ \simeq \ \mathcal{W} / \mathcal{W}_{{\rm root}}. 
\end{equation}   
Let $S_2^w \in \mathcal{W} $ be the image of the class $S_2$ under the projection $ {\rm NS}({\rm X}) \rightarrow \mathcal{W} $
associated with the factorization $(\ref{nssplit})$. Note that $S_2^w = S_2-S_1-2F$ and $S_2^w$ has self-intersection $-4$. One obtains the following criterion:
\begin{prop}
\label{critvgs}
The triple $(\varphi_{{\rm X}}, S_1, S_2)$ defines a Van Geemen-Sarti involution $\Phi \colon {\rm X} \rightarrow {\rm X} $ if and only if $2  S_2^w \in \mathcal{W}_{{\rm root}}$. 
\end{prop}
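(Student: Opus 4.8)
The plan is to reduce the statement to Shioda's description of the Mordell--Weil group and then to track the section $S_2$ through the isomorphism ${\rm MW}(\varphi_{{\rm X}}, S_1) \simeq \mathcal{W}/\mathcal{W}_{{\rm root}}$. By Definition \ref{geemensartidef}, items (a) and (b) are already part of the given data, and item (d) merely names the automorphism produced once an order-two section is in hand: by the standard results of Shioda \cite{shioda2}, fiber-wise translation by a section of order two on the smooth fibers extends uniquely to an automorphism of the whole surface, which is an involution and is symplectic because translations preserve the holomorphic two-form. Consequently the only genuine requirement is item (c), namely that $S_2$ represent an element of order \emph{exactly} two in ${\rm MW}(\varphi_{{\rm X}}, S_1)$, and the entire argument amounts to translating this into a lattice-theoretic statement about $S_2^w$.

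First I would record that, under the decomposition $(\ref{nssplit})$, the class $S_2$ maps under the projection ${\rm NS}({\rm X}) \to \mathcal{W}$ to $S_2^w = S_2 - S_1 - 2F$, and that Shioda's isomorphism ${\rm MW}(\varphi_{{\rm X}}, S_1) \simeq \mathcal{W}/\mathcal{W}_{{\rm root}}$ carries the section $S_2$ to the coset $[S_2^w] = S_2^w + \mathcal{W}_{{\rm root}}$. The key point here is that this isomorphism is induced by reducing a class in ${\rm NS}({\rm X}) = {\rm H} \oplus \mathcal{W}$ modulo the trivial lattice ${\rm H} \oplus \mathcal{W}_{{\rm root}}$, so it is compatible with the group law and, in particular, with the computation of the order of $S_2$. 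Granting this, the element $[S_2^w]$ has order at most two if and only if $2 S_2^w \in \mathcal{W}_{{\rm root}}$, which is one half of the claimed equivalence.

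The remaining point --- and the one deserving care --- is to rule out the degenerate possibility that $S_2$ has order one, i.e.\ that $[S_2^w] = 0$. Since Shioda's map is a group isomorphism, it is injective, so $[S_2^w] = 0$ would force $S_2$ to coincide with the zero section $S_1$; this is excluded by hypothesis (b), since $S_1$ and $S_2$ are disjoint and hence distinct. Note that the numerical observation $(S_2^w)^2 = -4 \ne 0$ only shows $S_2^w \ne 0$ in $\mathcal{W}$ and does \emph{not} by itself preclude $S_2^w \in \mathcal{W}_{{\rm root}}$ (a vector of square $-4$ can be a sum of orthogonal roots), which is exactly why the injectivity argument, rather than a self-intersection count, is the clean route. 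Therefore $[S_2^w] \ne 0$ holds automatically, and $2 S_2^w \in \mathcal{W}_{{\rm root}}$ is equivalent to $[S_2^w]$ having order exactly two.

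Assembling the pieces, the triple defines a Van Geemen--Sarti involution if and only if $S_2$ has order two in ${\rm MW}(\varphi_{{\rm X}}, S_1)$, if and only if $2 S_2^w \in \mathcal{W}_{{\rm root}}$. The main obstacle I anticipate is not any single estimate but the careful justification of the identification of $S_2$ with $[S_2^w]$ under Shioda's isomorphism and the compatibility of that identification with the Mordell--Weil group law; once this is in place, the equivalence is formal, with the only subtlety being the non-triviality check, which is dispatched using the disjointness hypothesis.
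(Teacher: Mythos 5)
Your argument is correct and follows exactly the route the paper intends: the proposition is stated as an immediate consequence of Shioda's isomorphism ${\rm MW}(\varphi_{{\rm X}}, S_1) \simeq \mathcal{W}/\mathcal{W}_{{\rm root}}$ and the identification of $S_2$ with the coset of $S_2^w = S_2 - S_1 - 2F$, with no further proof given in the text. Your explicit exclusion of the order-one case via injectivity of Shioda's map and the disjointness of $S_1$ and $S_2$ (rather than the inconclusive self-intersection count) is a point the paper leaves implicit, and it is handled correctly.
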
     
\noindent We also note that a Van Geemen-Sarti involution on a K3 surface ${\rm X}$ is equivalent to a pseudo-ample polarization by 
the rank-ten lattice ${\rm H} \oplus {\rm N}$ where ${\rm N}$ is the rank-eight Nikulin lattice as defined by \cite{morrison}. The 
Nikulin construction defines a natural involution on the ten-dimensional moduli space of ${\rm H} \oplus {\rm N}$-polarized K3 surfaces.

\section{Outline of the Paper}
In this work we construct Van Geemen-Sarti involutions on two specific classes of algebraic 
K3 surfaces. The first class consists of 
algebraic K3 surfaces ${\rm X}$ endowed with a pseudo-ample lattice polarization: 
$$ i \colon {\rm H} \oplus {\rm E}_7 \oplus {\rm E}_7 \ \hookrightarrow \ {\rm NS}({\rm X}).$$
This polarization structure is equivalent geometrically to a jacobian elliptic fibration on ${\rm X}$ that has two singular 
fibers of Kodaira type ${\rm III}^*$ or higher. For details regarding the concept of lattice polarization, we refer the reader 
to Dolgachev's paper \cite{dolgachev1} or the previous work \cite{clingher3} of the authors. For the purposes of this paper, an 
additional genericity condition is introduced (Definition $\ref{genk36i}$ of Section $\ref{sectionk3}$).
\par The second class of K3 surfaces consists of a special collection of double sextic surfaces - we consider 
surfaces ${\rm Z}$ obtained as minimal 
resolutions of double covers of the projective plane $\mathbb{P}^2$ branched over a configuration $\mathcal{L}$ of 
six distinct lines. The lines are assumed to be so located that no three of them pass through the same common point. 
We also introduce an explicit condition for genericity of $\mathcal{L}$, as given by Definition $\ref{genericity1}$ of Section $\ref{pseudokummer}$.    
\par The main results of this paper are as follows:
\begin{theorem}
\label{mainn11}
The K3 surfaces ${\rm Z}$ and ${\rm X}$ introduced above carry canonically-defined Van Geemen-Sarti involutions, 
denoted $ \Phi_{{\rm Z}}$ or $\Phi_{{\rm X}}$, respectively. 
\end{theorem}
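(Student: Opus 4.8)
The plan is to reduce both claims to the same combinatorial core: to exhibit, on each surface, a Jacobian elliptic fibration together with a zero section ${\rm S}_1$ and a second section ${\rm S}_2$ that is disjoint from ${\rm S}_1$ and of order two in ${\rm MW}(\varphi,{\rm S}_1)$. Once such a triple $(\varphi,{\rm S}_1,{\rm S}_2)$ is produced, Definition \ref{geemensartidef} directly yields the Van Geemen-Sarti involution as the fiber-wise translation by ${\rm S}_2$, while Proposition \ref{critvgs} supplies the cohomological certificate $2{\rm S}_2^w\in\mathcal{W}_{{\rm root}}$, which is automatic once ${\rm S}_2$ projects to a two-torsion class of $\mathcal{W}/\mathcal{W}_{{\rm root}}$. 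Thus the whole problem becomes the location of a suitable fibration-with-torsion on ${\rm Z}$ and on ${\rm X}$.

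For the double sextic ${\rm Z}$ I would use the pencil of lines through one of the fifteen nodes $p=\ell_i\cap\ell_j$ of $\mathcal{L}$. A general line $L$ of the pencil meets the branch locus in the doubled point at $p$ (carrying even contact, hence unramified) together with the four residual points $L\cap\ell_k$ with $k\neq i,j$, so its preimage in ${\rm Z}$ is the double cover of $\mathbb{P}^1$ branched at four points, an elliptic curve; this defines $\varphi_{{\rm Z}}\colon{\rm Z}\to\mathbb{P}^1$. The four residual lines sweep out the Weierstrass points of the fibers, and since each $\ell_k$ ($k\neq i,j$) avoids $p$, its proper transform $\widetilde{\ell_k}$ meets every fiber once, giving four sections that are mutually disjoint (the lines meet only at residual nodes, which are separated in the resolution). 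Declaring $\widetilde{\ell_{k_0}}={\rm S}_1$ the zero section, any other $\widetilde{\ell_{k_1}}={\rm S}_2$ is an order-two section disjoint from ${\rm S}_1$, and $\Phi_{{\rm Z}}$ follows. Using the genericity hypothesis of Definition \ref{genericity1} (no three lines concurrent) one identifies the reducible fibers — two of type ${\rm I}_0^*$ over the members $\ell_i,\ell_j$ of the pencil (central curves $\widetilde{\ell_i},\widetilde{\ell_j}$ with arms the exceptional curves over $\ell_i\cap\ell_k$, $\ell_j\cap\ell_k$) and six of type ${\rm I}_2$ over the lines joining $p$ to the remaining $\binom{4}{2}$ nodes — confirming a full two-torsion Mordell-Weil group $(\mathbb{Z}/2)^{\oplus 2}$.

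The surface ${\rm X}$ is subtler, and this is where the main obstacle lies: the fibration singled out by the polarization, the one with two fibers of type ${\rm III}^*$, is the \emph{wrong} one. For generic ${\rm X}$ its trivial lattice is all of ${\rm H}\oplus{\rm E}_7\oplus{\rm E}_7={\rm NS}({\rm X})$, so by Shioda's formula ${\rm MW}(\varphi,{\rm S}_1)$ is trivial and there is no order-two section; indeed the discriminant form of ${\rm E}_7\oplus{\rm E}_7$ has no nonzero isotropic vector, so ${\rm H}\oplus{\rm E}_7\oplus{\rm E}_7$ admits no even overlattice in which such a section could arise. I would therefore pass to a different Jacobian fibration on the same ${\rm X}$, built from the sub-root-system ${\rm D}_6\oplus{\rm A}_1\subset{\rm E}_7$. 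Concretely, I would exhibit a second splitting ${\rm NS}({\rm X})={\rm H}\oplus{\rm M}$ in which the root sublattice of ${\rm M}$ is ${\rm D}_6\oplus{\rm D}_6\oplus{\rm A}_1\oplus{\rm A}_1$ of index four in ${\rm M}$; the associated fibration $\varphi_{{\rm X}}$ then has two fibers of type ${\rm I}_2^*$ and a Mordell-Weil group of order four. This is equivalent to producing a primitive pseudo-ample embedding ${\rm H}\oplus{\rm N}\hookrightarrow{\rm NS}({\rm X})$ of the rank-ten lattice of Section \ref{isogenydef}, whose negative-definite orthogonal complement is a rank-six copy of ${\rm D}_6$.

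The decisive and most delicate steps on the ${\rm X}$ side are then the following. First, a discriminant-form computation must show that the required index-four overlattice ${\rm M}$ of ${\rm D}_6^{\oplus 2}\oplus{\rm A}_1^{\oplus 2}$, with $q_{\rm M}\cong q_{{\rm E}_7\oplus{\rm E}_7}$, exists with its gluing vectors taken to be \emph{non-roots}, so that $\mathcal{W}_{{\rm root}}$ stays equal to ${\rm D}_6^{\oplus 2}\oplus{\rm A}_1^{\oplus 2}$ and the order-four glue group is realized as Mordell-Weil torsion containing an element of order two; by Nikulin uniqueness for even indefinite lattices this gives ${\rm H}\oplus{\rm M}\cong{\rm NS}({\rm X})$ and hence the alternate fibration. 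Second, one checks that the resulting two-torsion section ${\rm S}_2$ is disjoint from ${\rm S}_1$ via the height formula: since a torsion section has height zero, $\sum_v{\rm contr}_v({\rm S}_2)=4+2({\rm S}_2\cdot{\rm S}_1)$, and the section meeting the far components of the two ${\rm I}_2^*$ fibers (contribution $3/2$ each) and of the two ${\rm A}_1$-type fibers (contribution $1/2$ each) forces $\sum_v{\rm contr}_v=4$, whence ${\rm S}_2\cdot{\rm S}_1=0$. Third, the genericity condition of Definition \ref{genk36i} is invoked to guarantee that the isotropic class defining $\varphi_{{\rm X}}$ is nef, so the fibration and its sections are geometrically realized and $\Phi_{{\rm X}}$ is canonically pinned down. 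With ${\rm S}_1,{\rm S}_2$ in hand, Proposition \ref{critvgs} applies verbatim. The hardest point throughout is exactly this identification of the correct alternate fibration and the verification that its torsion section avoids the zero section, since the fibration most naturally attached to the ${\rm E}_7\oplus{\rm E}_7$ polarization is precisely the one that fails to carry the involution.
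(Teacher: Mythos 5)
Your construction on the double sextic ${\rm Z}$ is correct but is not the paper's: the pencil of lines through a node $q_{ij}$ does lift to an elliptic fibration with singular fibers $2\times{\rm I}_0^*+6\times{\rm I}_2$ and four disjoint sections $\Delta_k$ ($k\neq i,j$) realizing the full two-torsion, so it does produce Van Geemen--Sarti involutions on ${\rm Z}$. However, the involution $\Phi_{{\rm Z}}$ of the statement is the specific one whose Nikulin quotient carries the ${\rm H}\oplus{\rm E}_7\oplus{\rm E}_7$-polarization feeding into Theorem \ref{mainn22}; the paper obtains it from the pencil $\vert{\rm D}\vert$ of singular quintics (triple point at $q_{13}$, double points at $q_{14},q_{25},q_{26}$, passing through four further nodes), whose pull-back has an ${\rm I}_4^*$ (or ${\rm I}_5^*$) fiber, sections $\Delta_5,\Delta_6$ and bisection $\Delta_4$, with $\Phi_{{\rm Z}}$ the translation by $\Delta_6$. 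Your fibration depends on a choice of one of fifteen nodes and one of three torsion sections, and its quotient (trivial lattice ${\rm H}\oplus{\rm D}_4^{\oplus 2}\oplus{\rm A}_1^{\oplus 6}$) is not the ${\rm H}\oplus{\rm E}_7\oplus{\rm E}_7$-polarized partner; so you have proved that \emph{some} Van Geemen--Sarti involution exists on ${\rm Z}$, not that the canonical one of the theorem does.

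The genuine gap is on the ${\rm X}$ side. You are right that the standard fibration with two ${\rm III}^*$ fibers has trivial Mordell--Weil torsion, and right that one must pass to an alternate fibration; but the alternate fibration you propose does not exist. Since ${\rm H}$ is unimodular, any primitive embedding ${\rm H}\hookrightarrow{\rm NS}({\rm X})$ splits off, so the frame lattice $\mathcal{W}$ must have determinant $4$ and discriminant form equal to that of ${\rm E}_7\oplus{\rm E}_7$; an overlattice of ${\rm D}_6^{\oplus2}\oplus{\rm A}_1^{\oplus2}$ (determinant $64$) achieving this requires an isotropic glue group of order four containing no roots in its nonzero cosets. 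Enumerating the discriminant group $(\mathbb{Z}/2\mathbb{Z})^6$, the only isotropic classes whose minimal representatives have square $<-2$ are the four of the form $(\text{spinor},\text{spinor},a,a)$ with minimal square $-4$; any two of these differ by a class of type $(v,0,0,0)$, $(0,v,0,0)$ or $(v,v,0,0)$, which is either non-isotropic or represented by a vector of square $-2$. Hence every order-four isotropic subgroup forces extra roots, the root system cannot remain ${\rm D}_6^{\oplus2}\oplus{\rm A}_1^{\oplus2}$, and there is no fibration with fibers $2\times{\rm I}_2^*+2\times{\rm I}_2$ and Mordell--Weil group of order four on a generic such ${\rm X}$: the discriminant-form computation you defer is exactly the step that fails. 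The fibration that works, and the one the paper reads off directly from the seventeen (or eighteen) rational curves of the two ${\rm III}^*$ fibers together with the section, is the alternate fibration with a single ${\rm I}_8^*$ (resp. ${\rm I}_{10}^*$) fiber: frame root lattice ${\rm D}_{12}\oplus{\rm A}_1\oplus{\rm A}_1$, glued by the unique root-free isotropic class of square $-4$, Mordell--Weil group $\mathbb{Z}/2\mathbb{Z}$ generated by the section $b_2$, after which Proposition \ref{critvgs} is verified by an explicit divisor computation.
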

\begin{theorem}
\label{mainn22}
If genericity is assumed on both sides, then one has a bijective correspondence:
\begin{equation}
\label{correspk3}
 ({\rm Z}, \mathcal{L}) \ \longleftrightarrow \ ({\rm X}, i) 
\end{equation}
between the two classes of surfaces, with the two K3 surfaces involved being related by a pair of dual geometric two-isogenies 
\begin{equation}
\label{chartt123}
\xymatrix 
{
{\rm Z} \ar @(dl,ul) _{\Phi_{{\rm Z}}} 
\ar @/_0.5pc/ @{-->} _{{\rm p}_{\Phi_{{\rm Z}}}} [rr]
&
& {\rm X} \ar @(dr,ur) ^{\Phi_{{\rm X}}} 
\ar @/_0.5pc/ @{-->} _{{\rm p}_{\Phi_{{\rm X}}}} [ll] \\
}
\end{equation}
as described in Section $\ref{isogenydef}$. 
\end{theorem}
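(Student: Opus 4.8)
The plan is to establish the correspondence at the level of periods, combining the explicit lattice bookkeeping attached to the Nikulin construction of Section \ref{isogenydef} with the Torelli theorem for K3 surfaces. First I would record that both sides form four-dimensional families: the ${\rm H}\oplus{\rm E}_7\oplus{\rm E}_7$-polarized surfaces ${\rm X}$ depend on $20-{\rm rank}({\rm H}\oplus{\rm E}_7\oplus{\rm E}_7)=4$ moduli, while six lines in $\mathbb{P}^2$ modulo ${\rm PGL}_3$ also depend on $12-8=4$ parameters. Because of this dimension match, it suffices to produce a period-compatible map in one direction and then exploit the self-duality of the pair of two-isogenies encoded in diagram $(\ref{chartt1})$; injectivity and surjectivity will then be consequences of the uniqueness and surjectivity of the period map.

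The core step is the lattice computation on the ${\rm X}$-side. By Theorem \ref{mainn11}, ${\rm X}$ carries a canonical Van Geemen-Sarti involution $\Phi_{{\rm X}}$ attached to the jacobian fibration $\varphi_{{\rm X}}$ with two fibers of Kodaira type ${\rm III}^*$ and its order-two section. I would apply the Nikulin construction to $\Phi_{{\rm X}}$ to obtain a K3 surface ${\rm Y}$ together with the degree-two rational map ${\rm p}_{\Phi_{{\rm X}}}\colon {\rm X}\dashrightarrow {\rm Y}$, and then compute ${\rm NS}({\rm Y})$ directly from the induced fibration $\varphi_{{\rm Y}}$: performing the fiberwise two-isogeny on each ${\rm III}^*$ configuration in a prescribed way and reading off the Kodaira types of $\varphi_{{\rm Y}}$ pins down the Néron–Severi lattice and exhibits a primitive embedding realizing ${\rm Y}$ as the minimal resolution of a double cover of $\mathbb{P}^2$ branched over six lines, so that $({\rm Y},\text{branch locus})$ is a member of the second class. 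Simultaneously, the push-forward morphism $(\ref{morphh})$ yields a Hodge isometry between the transcendental lattices ${\rm T}_{{\rm X}}$ and ${\rm T}_{{\rm Y}}$ up to the explicit rescaling dictated by the even-eight configuration and the Nikulin lattice ${\rm N}$; I would identify this rescaling precisely and verify that the result coincides with the transcendental lattice of the generic double sextic.

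To close the correspondence I would run the construction in the opposite direction, starting from $({\rm Z},\mathcal{L})$ and its canonical involution $\Phi_{{\rm Z}}$ (again Theorem \ref{mainn11}): the analogous analysis of $\varphi_{{\rm Z}}$ and its fiberwise two-isogeny recovers a surface carrying an ${\rm H}\oplus{\rm E}_7\oplus{\rm E}_7$ polarization, i.e.\ a member of the first class. The self-duality of the pair $({\rm p}_{\Phi_{{\rm X}}},{\rm p}_{\Phi_{{\rm Y}}})$ established in Section \ref{isogenydef} shows that the two constructions are mutually inverse on the underlying geometric data, and the matching of the transcendental Hodge structures then upgrades this, via Torelli, to a bijection of isomorphism classes $({\rm Z},\mathcal{L})\leftrightarrow({\rm X},i)$. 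Here the genericity hypotheses of Definitions \ref{genk36i} and \ref{genericity1} enter exactly to guarantee that the singular fibers remain of the expected types on both sides — in particular that no three of the six lines become concurrent — so that the lattice computations above stay uniformly valid across the family.

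The main obstacle I anticipate is the integral, as opposed to merely rational, determination of the transcendental isometry induced by ${\rm p}_{\Phi_{{\rm X}}}$. On transcendental cocycles the composite ${\rm p}^{*}\circ{\rm p}_{*}$ equals multiplication by two, which already forces an isometry ${\rm T}_{{\rm X}}\otimes\mathbb{Q}\cong{\rm T}_{{\rm Y}}\otimes\mathbb{Q}$; but fixing the correct integral lattices — the index-two gluings coming from the overlattice ${\rm E}_7\oplus{\rm A}_1\subset{\rm E}_8$ and from the Nikulin lattice ${\rm N}$ — and checking that the outcome is genuinely the double-sextic transcendental lattice rather than a proper rescaling of it, is the delicate point. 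This is precisely where the choice of ${\rm E}_7\oplus{\rm E}_7$ (rather than ${\rm E}_8\oplus{\rm E}_8$, which would instead produce a Shioda–Inose/Kummer picture with ${\rm T}_{{\rm X}}(2)\cong{\rm T}_{{\rm Y}}$) is essential, and I would expect to resolve it by an explicit discriminant-form computation rather than by a general structural argument.
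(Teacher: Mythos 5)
There is a genuine gap, on two levels. The concrete one first: you attach the Van Geemen--Sarti involution $\Phi_{{\rm X}}$ to ``the jacobian fibration $\varphi_{{\rm X}}$ with two fibers of Kodaira type ${\rm III}^*$ and its order-two section.'' That fibration (the \emph{standard} one, in the paper's terminology) generically has $\mathcal{W}=\mathcal{W}_{{\rm root}}={\rm E}_7\oplus{\rm E}_7$, hence trivial Mordell--Weil group, and carries no two-torsion section at all. The involution lives on a \emph{different} jacobian fibration — the alternate fibration $\varphi^{{\rm a}}_{{\rm X}}$, obtained from a second primitive embedding of ${\rm H}$ into ${\rm NS}({\rm X})$ and having a single fiber of type ${\rm I}_8^*$ (or ${\rm I}_{10}^*$ in the special case) — and proving that its second section has order two is itself a nontrivial root-lattice computation (Proposition \ref{critvgs} applied to $2b_2-2a_2-4{\rm F}^{{\rm a}}$). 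Your plan of ``performing the fiberwise two-isogeny on each ${\rm III}^*$ configuration'' therefore starts from the wrong object and would not get off the ground.

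The structural gap is that a period/Torelli argument, even if the transcendental and discriminant-form bookkeeping were carried out correctly, delivers at most a bijection of Hodge-theoretic isomorphism classes of surfaces; the theorem asserts a correspondence of \emph{pairs} $({\rm Z},\mathcal{L})\leftrightarrow({\rm X},i)$, in which the six-line configuration $\mathcal{L}$ is part of the data. Knowing that ${\rm NS}({\rm Y})$ contains the right lattice does not ``realize ${\rm Y}$ as a double sextic'': one must actually produce the map to $\mathbb{P}^2$ and the six branch lines, and that is where essentially all of the paper's work lies. The paper's route is entirely constructive and avoids periods: it quotients ${\rm Y}$ by the non-symplectic involution $x\mapsto -x$ on the fibers of $\varphi^{{\rm a}}_{{\rm Y}}$ to get a rational ruled surface ${\rm R}$, runs an explicit fourteen-step blow-down to $\mathbb{F}_0$ and then to $\mathbb{P}^2$, writes down the fifteen $(-1)$-classes ${\rm E}_{ij}$ in cohomology, verifies each is a unique smooth rational curve, and identifies the push-forwards of $\hat{{\rm a}}_4,\hat{{\rm a}}_7,\hat{{\rm S}},\hat{{\rm K}},\hat{{\rm a}}_2,\hat{{\rm Q}}$ as the six lines; the non-Kummer versus Kummer dichotomy is then read off from whether a certain conic through five of the $q_{ij}$ passes through $q_{34}$. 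Bijectivity comes not from Torelli but from the fact that the two constructions (Section \ref{pseudokummer} and Section \ref{sectionk3}) are mutually inverse via the dual-isogeny mechanism of Section \ref{isogenydef}. To rescue your approach you would need to invoke or reprove a global Torelli theorem for six-line double sextics (so that surjectivity of the period map onto the relevant domain, together with a reconstruction of $\mathcal{L}$ from the marking, is available), which is a substantial additional input that the proposal neither supplies nor cites.
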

\noindent Theorem $\ref{mainn22}$ remains true if the genericity conditions are removed. However, in that case, in order to account for all 
possible $ {\rm H} \oplus {\rm E}_7 \oplus {\rm E}_7$-polarized K3 surfaces $({\rm X},i)$, one has to allow for surfaces ${\rm Z}$ to degenerate 
to situations when at least three of the six lines in the configuration $\mathcal{L}$ are meeting at a point. The proofs associated with these 
degenerate cases will be included in a subsequent paper.   
\par The present work builds on ideas from paper \cite{clingher4}, where the authors have shown that dual pairs of geometric two-isogenies as 
in Section $\ref{isogenydef}$ relate K3 surfaces ${\rm X}$ polarized by the rank-eighteen lattice ${\rm H} \oplus {\rm E}_8 \oplus {\rm E}_8$ 
to Kummer surfaces ${\rm Z}$ associated to a cartesian product of two elliptic curves. In this situation, the Van Geemen-Sarti involution 
$\Phi_{{\rm X}}$ is a Shioda-Inose structure. This case was also considered by T. Shioda in \cite{shioda}. In an earlier work motivated by arithmetic 
considerations, B. Van Geemen and J. Top \cite{geementop} have presented a particular variant of the ${\rm H} \oplus {\rm E}_8 \oplus {\rm E}_8$ case - 
an isogeny between a one-dimensional family of K3 surfaces polarized by ${\rm H} \oplus {\rm E}_8 \oplus {\rm E}_8 \oplus {\rm A}_1(2)$ and Kummer 
surfaces associated to a cartesian product a pair of of 
two-isogeneous elliptic curves. 
\par In the appendix section to paper \cite{galluzzi} by F. Galluzzi and G. Lombardo, I. Dolgachev argued that any K3 surface ${\rm Z}$ with the 
Neron-Severi lattice ${\rm NS}({\rm X})$ isomorphic to ${\rm H} \oplus {\rm E}_8 \oplus {\rm E}_7 $ carries a canonical Shioda-Inose structure 
and the associated Nikulin construction leads to a Kummer surface associated with the Jacobian ${\rm Jac}({\rm C})$ of a genus-two curve. This 
situation appears here as a particular case of Theorem $\ref{mainn22}$. Polarized K3 surfaces $({\rm X},i)$ 
for which the lattice polarization extends to ${\rm H} \oplus {\rm E}_8 \oplus {\rm E}_7 $ correspond, under $(\ref{correspk3})$, to 
configurations $\mathcal{L}$ in which the six lines are tangent to a common conic. An explicit formula for determining the 
${\rm H} \oplus {\rm E}_8 \oplus {\rm E}_7 $-polarized K3 surface ${\rm X}$ has been given by A. Kumar \cite{kumar}.         
\par We shall also note that the geometric setting of Theorems $\ref{mainn11}$ and $\ref{mainn22}$ is ideal for performing explicit 
Kuga-Satake type constructions \cite{kugasatake} without relying on period computations. In the companion paper \cite{clingher5}, the 
authors use the results of this work in order to give a full classification of the K3 surfaces polarized by the lattice 
${\rm H} \oplus {\rm E}_8 \oplus {\rm E}_7 $ in terms of Siegel modular forms.  
\par The first author would like to thank P. Rao and G.V. Ravindra for useful discussions related to various aspects of this work.  
The second author would like to thank J. de Jong, who encouraged the authors in their pursuit of the geometric structure behind 
the modular invariants in \cite{clingher4} and \cite{clingher5}.

\section{Double Covers of the Projective Plane}
\label{pseudokummer}
Let $\mathcal{L}= \{ {\rm L}_1, {\rm L}_2, \cdots {\rm L}_6 \}$ be a configuration of six distinct lines in $\mathbb{P}^2$. We shall assume 
that no three of the six lines are concurrent. Denote by $q_{ij}$, with $1\leq i < j \leq 6$, the fifteen resulting intersection points. 
Let $\rho \colon {\rm R} \rightarrow \mathbb{P}^2 $ be the blow-up of the projective plane at the 
points $q_{ij}$ and denote by ${\rm L}'_1, {\rm L}'_2, \cdots {\rm L}'_6$ 
the rational curves in ${\rm R}$ obtained as the proper transforms of the six lines ${\rm L}_1, {\rm L}_1, \cdots {\rm L}_6$. Since
$$ \frac{1}{2} \ \sum_{i=1}^6 \ {\rm L}'_i \ \in {\rm NS}({\rm R}), $$
one has that there exists a double cover $ \pi \colon {\rm Z} \rightarrow {\rm R}$ branched over ${\rm L}'_1, {\rm L}'_2, \cdots {\rm L}'_6$. The surface ${\rm Z}$ 
is a smooth algebraic K3 surface of Picard rank sixteen or higher. In this section, we prove that the K3 surface ${\rm Z}$ so defined carries a canonical Van Geemen-Sarti involution denoted $\Phi_{{\rm Z}}$. Moreover, the K3 surface ${\rm W}$ resulting from the Nikulin construction associated to the involution $\Phi_{{\rm Z}}$ is endowed with a canonical  
${\rm H} \oplus {\rm E}_7 \oplus {\rm E}_7$ polarization. 
\par In order to define the involution $\Phi_{{\rm Z}}$, we follow the guidelines of Section $\ref{isogenydef}$. We introduce first an underlying 
elliptic fibration $\varphi_{{\rm Z}} \colon {\rm Z} \rightarrow \mathbb{P}^1 $ with two sections. We then show that 
fiber-wise translation by the second section determines a Van Geemen-Sarti involution. 
\subsection{A Special Elliptic Fibration on ${\rm Z}$} 
\noindent By construction, the surface ${\rm Z}$ comes endowed with a non-symplectic involution 
$\sigma \colon  {\rm Z} \rightarrow {\rm Z}$. The fixed locus of $\sigma$ is given by six rational curves $\Delta_1, \Delta_2, \cdots \Delta_6$, 
representing the ramification locus of the double cover map $ \pi \colon {\rm Z} \rightarrow {\rm R}$. 
We denote by ${\rm E}_{ij}$ the fifteen exceptional curves on the surface ${\rm R}$ and by 
${\rm G}_{ij}$ their respective strict transforms on ${\rm Z}$. Set also $ {\rm T} = (\pi \circ \rho)^*{\rm H} $ where ${\rm H}$ is a 
hyperplane divisor on $\mathbb{P}^2$.
\par The following divisor on ${\rm R}$ will prove to be instrumental: 
\begin{equation}
\label{ruling1}
{\rm D} \ =  \ 5 \rho ^* ({\rm H}) - 3{\rm E}_{13}- 2 \left ( {\rm E}_{14} + {\rm E}_{25}+ {\rm E}_{26} \right ) - 
\left ( {\rm E}_{24}+ {\rm E}_{35}+ {\rm E}_{36}+{\rm E}_{56} \right )  .
\end{equation}
The linear system $\vert {\rm D} \vert $ corresponds to curves of degree five in $\mathbb{P}^2$ passing through the four points 
$q_{24}, q_{35}, q_{36}, q_{56} $, having double points at $q_{14}, q_{25}, q_{26}$ and a triple point at $q_{13}$. 
\begin{prop}
\label{prpencil}
One has $h^1({\rm R}, {\rm D}) = 2 $. The pencil $\vert {\rm D} \vert $ is base-point free and its generic member is a smooth rational curve. 
The induced morphism 
\begin{equation}
\label{rulingonrr}
\varphi_{\vert {\rm D} \vert} \colon {\rm R} \rightarrow \mathbb{P}^1 
\end{equation} 
is a ruling.  
\end{prop}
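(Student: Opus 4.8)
The plan is to study $\vert {\rm D}\vert$ through Riemann--Roch on the rational surface ${\rm R}$ together with an explicit description of one special member. First I would record the intersection numbers. Since ${\rm R}$ is the blow-up of $\mathbb{P}^2$ at the fifteen points $q_{ij}$, one has $\rho^*({\rm H})^2 = 1$, $\rho^*({\rm H})\cdot {\rm E}_{ij} = 0$, and the ${\rm E}_{ij}$ are pairwise orthogonal with ${\rm E}_{ij}^2 = -1$. A direct computation then gives ${\rm D}^2 = 25 - 9 - 3\cdot 4 - 4 = 0$ and ${\rm D}\cdot K_{{\rm R}} = -2$, so that the arithmetic genus of ${\rm D}$ is $0$ and $\chi(\mathcal{O}_{{\rm R}}({\rm D})) = 1 + \tfrac12({\rm D}^2 - {\rm D}\cdot K_{{\rm R}}) = 2$. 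For the top cohomology I use Serre duality: $h^2({\rm R},{\rm D}) = h^0({\rm R}, K_{{\rm R}} - {\rm D})$, and since $(K_{{\rm R}} - {\rm D})\cdot \rho^*({\rm H}) = -3 - 5 < 0$ while $\rho^*({\rm H})$ is nef, the class $K_{{\rm R}} - {\rm D}$ cannot be effective; hence $h^2 = 0$ and $h^0({\rm R},{\rm D}) = 2 + h^1({\rm R},{\rm D}) \ge 2$. The statement thus reduces to exhibiting the pencil and proving $h^1({\rm R},{\rm D}) = 0$, i.e.\ that $\vert {\rm D}\vert$ is a pencil with $h^0({\rm R},{\rm D}) = 2$.

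Under the standard dictionary a global section of $\mathcal{O}_{{\rm R}}({\rm D})$ is a plane quintic with a triple point at $q_{13}$, double points at $q_{14},q_{25},q_{26}$, and simple points at $q_{24},q_{35},q_{36},q_{56}$; these impose $6 + 3\cdot 3 + 4 = 19$ conditions on the $21$-dimensional space of quintics, so the expected projective dimension of $\vert {\rm D}\vert$ is $1$. To make this concrete I would write down the reducible quintic ${\rm C} = {\rm L}_1 + {\rm L}_2 + {\rm L}_3 + {\rm B}$, where ${\rm B}$ is the unique smooth conic through the five points $q_{13},q_{14},q_{25},q_{26},q_{56}$: a quick multiplicity check shows that ${\rm L}_1,{\rm L}_3,{\rm B}$ produce the triple point at $q_{13}$, that ${\rm L}_1,{\rm L}_2$ together with ${\rm B}$ produce the three double points, and that the four simple points are accounted for by ${\rm L}_2,{\rm L}_3$ and ${\rm B}$. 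Its divisor on ${\rm R}$ is ${\rm L}'_1+{\rm L}'_2+{\rm L}'_3+{\rm B}'$ together with a combination of the exceptional curves ${\rm E}_{ij}$, and exhibits the structure of a reducible member of $\vert {\rm D}\vert$.

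To finish I pass to base-point-freeness and the fibration. I would first argue that $\vert {\rm D}\vert$ has no fixed component: any fixed component is among the curves appearing in ${\rm C}$, each of which has negative self-intersection and satisfies ${\rm D}\cdot\Gamma = 0$ (one checks ${\rm D}\cdot {\rm L}'_1 = {\rm D}\cdot {\rm L}'_2 = {\rm D}\cdot {\rm L}'_3 = {\rm D}\cdot {\rm B}' = 0$), and comparing with a general irreducible member rules each one out. With no fixed part and ${\rm D}^2 = 0$, any two general members are effective, share no component, and meet in ${\rm D}^2 = 0$ points, hence are disjoint; therefore $\vert {\rm D}\vert$ is base-point free. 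The associated morphism has one-dimensional image because ${\rm D}^2 = 0$, and since ${\rm D}$ is primitive in ${\rm NS}({\rm R})$ (its coefficients have greatest common divisor $1$) the fibration cannot be composed with a pencil: Stein factorization over the rational surface ${\rm R}$ produces a genuine fibration $\varphi_{\vert {\rm D}\vert}\colon {\rm R}\to \mathbb{P}^1$ with connected fibres and $\mathcal{O}_{{\rm R}}({\rm D}) = \varphi_{\vert {\rm D}\vert}^*\mathcal{O}_{\mathbb{P}^1}(1)$. The projection formula then gives $h^0({\rm R},{\rm D}) = h^0(\mathbb{P}^1,\mathcal{O}(1)) = 2$, so $h^1({\rm R},{\rm D}) = 0$ and $\vert {\rm D}\vert$ is a pencil; Bertini makes the general fibre smooth, and being connected with $p_a = 0$ it is a single $\mathbb{P}^1$, so $\varphi_{\vert {\rm D}\vert}$ is a ruling.

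The main obstacle is the no-fixed-component / independence-of-conditions step: showing that the nineteen conditions are genuinely independent, equivalently that $h^1({\rm R},{\rm D}) = 0$, equivalently that a general member is irreducible. This is the only place where the specific geometry of the configuration $\mathcal{L}$ enters, and in particular the hypothesis that no three of the six lines are concurrent: it controls the multiplicity of each reducible test curve at every $q_{ij}$ (a union of lines reaches multiplicity at most two at any $q_{ij}$, which is precisely why the conic ${\rm B}$ is forced at the triple point) and prevents any of the lines ${\rm L}_i$ from being a base component. Everything else --- the Riemann--Roch and Serre-duality bookkeeping, the ${\rm D}^2 = 0$ argument for base-point-freeness, Stein factorization, and Bertini --- is formal once the special member ${\rm C}$ is in hand.
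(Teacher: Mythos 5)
Your overall architecture is sound, and the first half matches the paper's proof: Riemann--Roch with ${\rm D}^2=0$ and ${\rm D}\cdot{\rm K}_{{\rm R}}=-2$ gives $\chi=2$, the nef class $\rho^*({\rm H})$ kills $h^2$, and the reducible quintic ${\rm L}_1+{\rm L}_2+{\rm L}_3+{\rm B}$ is exactly the special member $(\ref{squint3})$ the paper constructs (the paper first reduces to the blow-up at the eight points actually appearing in ${\rm D}$, which is why it can write the member as ${\rm L}'_1+{\rm L}'_2+{\rm L}'_3+{\rm C}'$ on the nose, without your extra exceptional components). Your route to $h^0({\rm R},{\rm D})=2$ at the end is genuinely different from the paper's and is valid: once base-point-freeness is known, Stein factorization over the rational surface ${\rm R}$ plus the primitivity of ${\rm D}$ in ${\rm NS}({\rm R})$ forces ${\rm D}$ to be the class of a connected fiber, and the projection formula gives $h^0=2$, $h^1=0$. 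The paper instead invokes the generalized del Pezzo structure of ${\rm R}$ (the eight points being in almost general position, so $-{\rm K}_{{\rm R}}$ is nef) and applies Ramanujam vanishing to ${\rm K}_{{\rm R}}-{\rm D}$; your version avoids that machinery entirely, which is a real simplification. (The statement's ``$h^1({\rm R},{\rm D})=2$'' is evidently a typo for $h^0$; both you and the paper end up proving $h^0=2$ and $h^1=0$.)

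The genuine gap is the step you yourself flag as the main obstacle: ruling out fixed components. Your justification --- ``comparing with a general irreducible member rules each one out'' --- is circular, because irreducibility of the general member is a \emph{consequence} of base-point-freeness (via Bertini and connectedness of fibers), which is precisely what you are trying to establish. You already have all the needed data on the table, and the paper closes the loop in two lines: if $p$ were a base point of $\vert{\rm D}\vert$, it would lie on some component $\Gamma$ of the special member; a general member ${\rm M}$ passes through $p$ and satisfies ${\rm M}\cdot\Gamma={\rm D}\cdot\Gamma=0$, so ${\rm M}$ cannot meet $\Gamma$ in a nonempty finite set and must therefore contain $\Gamma$, i.e.\ $\Gamma$ is a fixed component. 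Since the special member is connected and every one of its components $\Gamma'$ satisfies ${\rm D}\cdot\Gamma'=0$, the same argument propagates along the configuration (each newly fixed component meets the next one, producing a base point on it) and forces the \emph{entire} special member into the base locus, whence $h^0({\rm R},{\rm D})=h^0({\rm R},0)=1$, contradicting $h^0\geq 2$. Replacing your circular sentence with this propagation argument completes the proof; note that it kills isolated base points and fixed components simultaneously, so your separate ``two general members meet in ${\rm D}^2=0$ points'' step becomes unnecessary.
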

\begin{proof}
Note that it suffices to prove the above statement assuming that ${\rm R}$ is the blow-up of $\mathbb{P}^2$ at the eight points 
$q_{13}$, $q_{14}$, $q_{24}$, $q_{25}$, $q_{26}$, $q_{35}$, $q_{36}$, $q_{56} $. The eight points in question are in 
{\it almost general position} (as defined in \cite{demazure3}). Then, as proved in \cite{demazure3, coray}, the rational surface ${\rm R}$ 
is a generalized Del Pezzo surface with the anticanonical line bundle $ - {\rm K}_{{\rm R}}$ having the big and nef properties. 
\par Since ${\rm D}^2=0$ and ${\rm D} \cdot {\rm K}_{{\rm R}} = -2$, one obtains, via the Riemann-Roch formula:
$$ h^0({\rm R}, {\rm D})- h^1({\rm R}, {\rm D}) + h^2({\rm R}, {\rm D}) \ = \ 2.$$
But $ h^2({\rm R}, {\rm D}) = h^0({\rm R}, {\rm K}_{{\rm R}} - {\rm D}) = 0 $. In particular $h^0({\rm R}, {\rm D}) \geq 2$.
\par Let ${\rm C}$ be the unique conic in 
$\mathbb{P}^2$ passing through the five points $q_{13},q_{14}, q_{25}, q_{26}, q_{56}$. The conic ${\rm C}$ is smooth. 
Denote by ${\rm C}'$ the rational curve on ${\rm R}$ obtained as the proper transform of ${\rm C}$. Then: 
\begin{equation}
\label{squint3}
{\rm L}'_1 + {\rm L}'_2 + {\rm L}'_3 + {\rm C}'
\end{equation} 
is a special member of $ \vert {\rm D} \vert $. As 
${\rm D} \cdot {\rm L}'_1 = {\rm D} \cdot {\rm L}'_2 ={\rm D} \cdot {\rm L}'_3 = {\rm D} \cdot {\rm C}' = 0$, 
if $ \vert {\rm D} \vert $ were to have base points, then the entire divisor $(\ref{squint3})$ would be part of the base locus. 
This would imply $h^0({\rm R}, {\rm D}) = 1$, contradicting the above estimation. 
The pencil $\vert {\rm D} \vert$ has therefore no base points. By Bertini's Theorem, the generic member of $\vert {\rm D} \vert$ is smooth and irreducible, 
and by the degree-genus formula we obtain that the generic member is a smooth rational curve.  
\par It remains to be shown that $h^1({\rm R}, {\rm D})=0$. One has 
$ h^1({\rm R}, {\rm D}) = h^1({\rm R}, {\rm K}_{{\rm R}} - {\rm D}) $. But $ ( {\rm D} - {\rm K}_{{\rm R}} )^2=5 $ and since both 
${\rm D}$ and $  - {\rm K}_{{\rm R}} $ are nef, one has that ${\rm D} - {\rm K}_{{\rm R}}$ is nef. 
By Ramanujam's Vanishing Theorem \cite{ramanujam}, one obtains $h^1({\rm R}, {\rm K}_{{\rm R}} - {\rm D}) = 0$. 
\end{proof}
\noindent Note that the lines ${\rm L}'_5$ and ${\rm L}'_6$ are disjoint sections of the ruling $(\ref{rulingonrr})$, while ${\rm L}'_4$ is a bi-section.     
The entire construction lifts then to the level of the K3 surface ${\rm Z}$ where one obtains:   
\begin{lem}
The pull-back under the double cover $\rho \colon  {\rm Z} \rightarrow {\rm R}$ of the linear system associated to $(\ref{ruling1})$, i.e.  
$$ \vert \ 5{\rm T} - 3{\rm G}_{13}- 2 \left ( {\rm G}_{14}+{\rm G}_{25}+{\rm G}_{26} \right ) - 
\left ( {\rm G}_{24}+ {\rm G}_{35}+ {\rm G}_{36}+ {\rm G}_{56} \right ) \ \vert  , $$ 
determines an elliptic fibration 
$\varphi_{{\rm Z}} \colon {\rm Z} \rightarrow \mathbb{P}^1 $ with 
the smooth rational curves $\Delta_5$ and $\Delta_6$ as distinct sections. 
\end{lem}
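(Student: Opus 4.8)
The plan is to realize $\varphi_{{\rm Z}}$ as the composition of the ruling $\varphi_{\vert {\rm D}\vert}$ of Proposition \ref{prpencil} with the double cover $\pi \colon {\rm Z}\to{\rm R}$, and to read off all of its properties by intersection theory on ${\rm Z}$. First I would record the pull-backs of the relevant classes. Each exceptional curve ${\rm E}_{ij}$ satisfies ${\rm E}_{ij}\cdot {\rm L}'_k=1$ precisely for $k\in\{i,j\}$, so it meets the branch locus ${\rm B}={\rm L}'_1+\cdots+{\rm L}'_6$ transversally in exactly two points; consequently its preimage ${\rm G}_{ij}=\pi^{-1}({\rm E}_{ij})$ is an irreducible rational curve with $\pi^*{\rm E}_{ij}={\rm G}_{ij}$ and ${\rm G}_{ij}^2=2{\rm E}_{ij}^2=-2$. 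Together with $\pi^*\rho^*{\rm H}={\rm T}$ this identifies $\pi^*{\rm D}$ with the divisor $5{\rm T}-3{\rm G}_{13}-2({\rm G}_{14}+{\rm G}_{25}+{\rm G}_{26})-({\rm G}_{24}+{\rm G}_{35}+{\rm G}_{36}+{\rm G}_{56})$ of the statement. Since $\vert {\rm D}\vert$ is base-point free by Proposition \ref{prpencil}, the pulled-back pencil $\pi^*\vert {\rm D}\vert$ is base-point free as well and defines a morphism $\varphi_{{\rm Z}}=\varphi_{\vert {\rm D}\vert}\circ\pi\colon{\rm Z}\to\mathbb{P}^1$ whose members lie in the class $\pi^*{\rm D}$, with $(\pi^*{\rm D})^2=2{\rm D}^2=0$.

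The heart of the matter is the genus of a generic fibre. A generic member ${\rm D}_t$ of $\vert {\rm D}\vert$ is a smooth rational curve, disjoint from the fibre components ${\rm L}'_1,{\rm L}'_2,{\rm L}'_3$ (for which ${\rm D}\cdot {\rm L}'_i=0$, so they are contracted by $\varphi_{\vert {\rm D}\vert}$), so ${\rm D}_t$ meets ${\rm B}$ only along ${\rm L}'_4,{\rm L}'_5,{\rm L}'_6$. From ${\rm D}\cdot {\rm L}'_4=2$ and ${\rm D}\cdot {\rm L}'_5={\rm D}\cdot {\rm L}'_6=1$ one gets ${\rm D}\cdot {\rm B}=4$ transverse branch points, so the fibre $\pi^{-1}({\rm D}_t)$ is a smooth double cover of $\mathbb{P}^1$ branched at four points. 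Such a cover is irreducible, and Riemann--Hurwitz gives it genus one. Hence $\varphi_{{\rm Z}}$ is a genus-one fibration; irreducibility of the generic fibre makes the Stein factorization trivial, so the base is $\mathbb{P}^1$.

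It remains to produce the two sections. From $\pi^*{\rm L}'_i=2\Delta_i$ and the projection formula I would compute $\Delta_5\cdot\pi^*{\rm D}=\tfrac12\,\pi^*{\rm L}'_5\cdot\pi^*{\rm D}={\rm L}'_5\cdot {\rm D}=1$, and likewise $\Delta_6\cdot\pi^*{\rm D}=1$; thus each of the smooth rational curves $\Delta_5,\Delta_6$ meets every fibre in a single point and is a section, so $\varphi_{{\rm Z}}$ is a jacobian elliptic fibration. Disjointness follows from $\Delta_5\cdot\Delta_6=\tfrac12({\rm L}'_5\cdot {\rm L}'_6)=0$, since ${\rm L}'_5$ and ${\rm L}'_6$ are disjoint on ${\rm R}$; the two sections are clearly distinct.

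The step I expect to require the most care is showing that the complete system $\vert\pi^*{\rm D}\vert$, and not merely the sub-pencil $\pi^*\vert {\rm D}\vert$, cuts out this fibration, equivalently that $h^0({\rm Z},\pi^*{\rm D})=2$. Using $\pi_*\mathcal{O}_{{\rm Z}}=\mathcal{O}_{{\rm R}}\oplus\mathcal{O}_{{\rm R}}(-\tfrac12{\rm B})$ together with Proposition \ref{prpencil}, this reduces to the vanishing $h^0({\rm R},{\rm D}-\tfrac12{\rm B})=0$. Here Riemann--Roch on the fifteen-point blow-up only yields $\chi=0$, so one must argue geometrically: the class ${\rm D}-\tfrac12{\rm B}$ meets seven of the exceptional curves negatively, forcing them to be fixed components, and after removing them one is left with the conic system $2\rho^*{\rm H}-2{\rm E}_{13}-{\rm E}_{14}-{\rm E}_{25}-{\rm E}_{26}$, i.e. conics with a node at $q_{13}$ passing through $q_{14},q_{25},q_{26}$. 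Such a conic would split into two lines through $q_{13}$, one of which must be ${\rm L}_1$, and the remaining incidences are then incompatible with the no-three-concurrent and genericity hypotheses. This is the one point where the combinatorics of the line configuration, rather than lattice theory, is decisive.
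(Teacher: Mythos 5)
Your proof is correct and follows exactly the route the paper intends: the paper states this lemma without proof, but the surrounding text (the remark that ${\rm L}'_5$, ${\rm L}'_6$ are disjoint sections of the ruling while ${\rm L}'_4$ is a bi-section, and that the smooth fibers are double covers of the ruling's fibers branched at the four intersection points with ${\rm L}'_4$, ${\rm L}'_5$, ${\rm L}'_6$) is precisely the Riemann--Hurwitz computation you carry out. Your final paragraph verifying $h^0({\rm Z},\pi^*{\rm D})=2$ goes beyond what the lemma strictly requires (the base-point-free pulled-back pencil already defines the fibration), but it is a correct and worthwhile supplement.
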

\noindent The smooth fibers of $\varphi_{{\rm Z}}$ appear as double covers of the smooth rational curves of the ruling $(\ref{rulingonrr})$, with the branch 
locus given by the four points of intersection with ${\rm L}'_4$, ${\rm L}'_5$ and ${\rm L}'_6$. 
\par Let us discuss the basic properties of the elliptic fibration $\varphi_{{\rm Z}}$. We shall 
differentiate between the following two possibilities: 
\begin{itemize}
\item [(a)] the six lines of the configuration $\mathcal{L}$ are tangent to a common smooth conic in $\mathbb{P}^2$,  
\item [(b)] there is no smooth conic tangent to all the six lines of the configuration $\mathcal{L}$. 
\end{itemize} 
In situation (a) the surface ${\rm Z}$ is a Kummer surface associated to 
the Jacobian of a genus-two curve. We shall refer to such a six-line configuration as ${\bf special}$ 
or ${\bf Kummer}$. If the six-line configuration $\mathcal{L}$ is in situation (b), we shall refer to it as {\bf non-special} 
or {\bf non-Kummer}. 
\begin{prop}
\label{thebigfiber}
If the six-line configuration $\mathcal{L}$ is non-Kummer then the elliptic fibration 
$\varphi_{{\rm Z}} \colon {\rm Z} \rightarrow \mathbb{P}^1 $ has a singular fiber of type ${\rm I}_4^*$. This special fiber becomes of type 
${\rm I}_5^*$ in the Kummer case.  
\end{prop}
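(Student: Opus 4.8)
The plan is to exploit the factorization $\varphi_{{\rm Z}} = \varphi_{\vert {\rm D} \vert} \circ \pi$ through the ruling of Proposition \ref{prpencil}, so that each fiber of $\varphi_{{\rm Z}}$ is the double cover of a fiber of the ruling branched along its intersection with the branch sextic $\sum_{i=1}^{6} {\rm L}'_i$. The large singular fiber sits over the reducible member $(\ref{squint3})$, so the whole argument reduces to pulling back that one member. First I would pin down the reducible fiber on ${\rm R}$. Taking the $q_{ij}$-exceptionals as a basis of ${\rm NS}({\rm R})$ and bookkeeping linear equivalences, the special member $(\ref{squint3})$ extends on the full fifteen-point blow-up to $F_{0} = {\rm L}'_1 + {\rm L}'_2 + {\rm L}'_3 + {\rm C}' + 2{\rm E}_{12} + {\rm E}_{15} + {\rm E}_{16} + 2{\rm E}_{23} + {\rm E}_{34}$, where the vertical exceptional curves (those ${\rm E}_{ij}$ with ${\rm D} \cdot {\rm E}_{ij} = 0$) enter to make $F_0$ connected and realize $F_0 \sim {\rm D}$. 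Checking $F_0 \cdot {\rm E}_{ij} = 0$ component-by-component confirms this is the entire fiber; its dual graph on ${\rm R}$ has the three mutually disjoint lines linked through the exceptional curves, with ${\rm C}'$ attached to ${\rm L}'_3$ at the residual point of ${\rm C} \cap {\rm L}_3$.

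Next I would pull $F_0$ back under $\pi$. Because all fifteen intersection points are blown up, the six curves ${\rm L}'_i$ are pairwise disjoint, the branch sextic is smooth, and $\pi$ is a smooth double cover, so no resolution is needed. Each branch component satisfies $\pi^{*}{\rm L}'_i = 2\Delta_i$, while each non-branch component meets the branch sextic in exactly two points (indeed ${\rm E}_{ij} \cdot \sum_k {\rm L}'_k = 2$ and ${\rm C}' \cdot \sum_k {\rm L}'_k = 2$, since no three lines are concurrent), hence its preimage is an irreducible rational curve. This yields $\pi^{*}F_0 = 2\Delta_1 + 2\Delta_2 + 2\Delta_3 + \widetilde{{\rm C}} + 2{\rm G}_{12} + {\rm G}_{15} + {\rm G}_{16} + 2{\rm G}_{23} + {\rm G}_{34}$, a connected configuration of nine curves. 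The crucial simplification is that every node of $F_0$ lies on one of the lines ${\rm L}'_i$, hence is a ramification point, so each adjacency is preserved as a single transverse intersection upstairs. Using the double-cover relations $\Delta_i^2 = \tfrac12({\rm L}'_i)^2 = -2$, $\widetilde{{\rm C}}^2 = 2({\rm C}')^2 = -2$, and ${\rm G}_{ij}^2 = 2({\rm E}_{ij})^2 = -2$, all nine curves are $(-2)$-curves, and their dual graph is the affine diagram $\widetilde{D}_8$: a central multiplicity-two chain $\Delta_1 - {\rm G}_{12} - \Delta_2 - {\rm G}_{23} - \Delta_3$ carrying the forks $\{{\rm G}_{15}, {\rm G}_{16}\}$ and $\{{\rm G}_{34}, \widetilde{{\rm C}}\}$. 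Since the fiber multiplicities match, $\pi^{*}F_0$ is a fiber of type ${\rm I}_4^{*}$, settling the non-Kummer case.

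For the Kummer case the entire difference is concentrated in the position of the conic ${\rm C}$. The key geometric input is that the six lines being tangent to a common smooth conic forces the auxiliary conic ${\rm C}$ through $q_{13}, q_{14}, q_{25}, q_{26}, q_{56}$ to pass in addition through $q_{34} = {\rm L}_3 \cap {\rm L}_4$, so its residual intersections with ${\rm L}_3$ and ${\rm L}_4$ collide at that node. On the blow-up this replaces ${\rm C}'$ by $2\rho^{*}{\rm H} - {\rm E}_{13} - {\rm E}_{14} - {\rm E}_{25} - {\rm E}_{26} - {\rm E}_{56} - {\rm E}_{34}$, with two consequences: the exceptional curve ${\rm E}_{34}$ now enters $F_0$ with multiplicity two (lengthening the chain through ${\rm L}'_3 - {\rm E}_{34} - {\rm C}'$), and ${\rm C}'$ becomes disjoint from the branch sextic, ${\rm C}' \cdot \sum_k {\rm L}'_k = 0$. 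An unramified double cover of ${\rm C}' \cong \mathbb{P}^1$ splits, so $\widetilde{{\rm C}} = \widetilde{{\rm C}}_{+} + \widetilde{{\rm C}}_{-}$ is two disjoint $(-2)$-curves, each meeting ${\rm G}_{34}$ once at one sheet over the non-branch point ${\rm C}' \cap {\rm E}_{34}$. Re-running the pull-back produces ten $(-2)$-curves forming $\widetilde{D}_9$, namely the chain $\Delta_1 - {\rm G}_{12} - \Delta_2 - {\rm G}_{23} - \Delta_3 - {\rm G}_{34}$ with forks $\{{\rm G}_{15}, {\rm G}_{16}\}$ and $\{\widetilde{{\rm C}}_{+}, \widetilde{{\rm C}}_{-}\}$, i.e.\ a fiber of type ${\rm I}_5^{*}$.

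The main obstacle is precisely the geometric translation in the Kummer case: proving that ``the six lines are tangent to a common smooth conic'' is equivalent to the incidence ``${\rm C}$ passes through $q_{34}$''. I would establish this by a direct computation in coordinates, or by a Cayley--Bacharach / dual-conic argument, verifying that both are cut out by the same single relation among the six lines; the Euler-number balance $e({\rm I}_4^{*}) = 10$ versus $e({\rm I}_5^{*}) = 11$ then confirms that exactly one unit of discriminant is absorbed into the large fiber, consistent with a nearby ${\rm I}_1$ degenerating into it. The remaining steps — the linear-equivalence bookkeeping that identifies $F_0$, and the observation that all nodes of $F_0$ are ramification points — are routine once the generalized Del Pezzo model of Proposition \ref{prpencil} is in hand.
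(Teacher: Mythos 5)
Your argument is correct and follows essentially the same route as the paper's proof: identify the reducible member of the ruling containing ${\rm L}'_1+{\rm L}'_2+{\rm L}'_3+{\rm C}'$, pull it back under the double cover, and distinguish the two cases by whether the conic ${\rm C}$ passes through $q_{34}$, so that $\pi^*{\rm C}'$ stays irreducible or splits into $\Gamma_1+\Gamma_2$. You are in fact more explicit than the paper about the vertical exceptional components of the fiber and their multiplicities, and you rely on the same classical Pascal--Brianchon incidence fact at the same level of detail as the paper does.
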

\begin{proof}
Let ${\rm C}$ and ${\rm C}'$ be the curves defined within the proof of Proposition $ \ref{prpencil}$. Note that, as a consequence of the classical 
theorems of Pascal and Brianchon, the six-line configuration 
$\mathcal{L}$ is Kummer if and only if the conic curve ${\rm C}$ passes through $q_{34}$. 
\par If the 
configuration $\mathcal{L}$ is non-Kummer, then under the double cover map 
$ \pi \colon {\rm Z} \rightarrow {\rm R}$, one has $\pi^* {\rm C}' =  \Gamma $ where $\Gamma $ is smooth rational curve. 
The involution $\sigma$ maps the curve $ \Gamma $ to itself, with two fixed points located at the points of intersection with 
$ \Delta_{3}$ and $ \Delta_{4}$, respectively. 
\par However, if the six-line configuration $\mathcal{L}$ is Kummer, then one has:
$$ \pi^* {\rm C}' =  \Gamma_1 + \Gamma_2 $$  
where $ \Gamma_1$, $ \Gamma_2 $ are two disjoint smooth rational curves. The two curves $ \Gamma_1$, $ \Gamma_2 $ are mapped 
one onto the other by the involution $\sigma$.
\par One obtains in this way a special configuration of rational curves on the K3 surface ${\rm Z}$. If $\mathcal{L}$ is not Kummer, 
we have the following dual diagram: 
\begin{equation}
\label{diagg11}
\def\objectstyle{\scriptstyle}
\def\labelstyle{\scriptstyle}
\xymatrix @-0.9pc
{
 & \stackrel{{\rm G}_{34}}{\bullet} \ar @{-} [dr] & & & & & & \stackrel{{\rm G}_{15}}{\bullet} \ar @{-} [dl]
 & \stackrel{\Delta_{5}}{\bullet} \ar @{-} [l] \\
\stackrel{\Delta_{4}}{\bullet} \ar @{-} [dr] \ar @{-} [ur]& & \stackrel{\Delta_{3}}{\bullet} \ar @{-} [r] \ar @{-} [dl] &
\stackrel{{\rm G}_{23}}{\bullet} \ar @{-} [r] &
\stackrel{\Delta_{2}}{\bullet} \ar @{-} [r] &
\stackrel{{\rm G}_{12}}{\bullet} \ar @{-} [r] &
\stackrel{\Delta_{1}}{\bullet} \ar @{-} [dr] & \\
&  \stackrel{\Gamma}{\bullet} & & & &   & & \stackrel{{\rm G}_{16}}{\bullet} & \stackrel{\Delta_{6}}{\bullet} \ar @{-} [l]\\
}
\end{equation}
The special divisor:
$$ {\rm G}_{34} + \Gamma + 2 \left ( \Delta_3 + {\rm G}_{23} + \Delta _2+{\rm G}_{12} + \Delta_1 \right ) + {\rm G}_{15}+{\rm G}_{16}  $$
is the pull-back on ${\rm Z}$ of the special quintic curve $(\ref{squint3})$ and is a singular fiber of Kodaira type ${\rm I}_4^*$ for the elliptic 
fibration $\varphi_{{\rm Z}}$. The diagram above also includes the two sections $ \Delta_5$ and $\Delta_6$, 
as well as the bi-section $\Delta_4$.  
\par If the configuration $\mathcal{L}$ is Kummer, the dual diagram of rational curves gets modified as follows:
\begin{equation}
\label{diagg22}
\def\objectstyle{\scriptstyle}
\def\labelstyle{\scriptstyle}
\xymatrix @-0.9pc
{
& \stackrel{\Gamma_1}{\bullet} \ar @{-} [dr] & & & &  & & & \stackrel{{\rm G}_{15}}{\bullet} \ar @{-} [dl]
 & \stackrel{\Delta_{5}}{\bullet} \ar @{-} [l] \\
\stackrel{\Delta_{4}}{\bullet} \ar @{-} [rr]  & & \stackrel{{\rm G}_{34}}{\bullet} \ar @{-} [r] \ar @{-} [dl] &
\stackrel{\Delta_{3}}{\bullet} \ar @{-} [r] &
\stackrel{{\rm G}_{23}}{\bullet} \ar @{-} [r] &
\stackrel{\Delta_{2}}{\bullet} \ar @{-} [r] &
\stackrel{{\rm G}_{12}}{\bullet} \ar @{-} [r] &
\stackrel{\Delta_{1}}{\bullet} \ar @{-} [dr] & \\
& \stackrel{\Gamma_2}{\bullet} & & & &   & & & \stackrel{{\rm G}_{16}}{\bullet} & \stackrel{\Delta_{6}}{\bullet} \ar @{-} [l]\\
}
\end{equation}
The pull-back to ${\rm Z}$ of the quintic curve $(\ref{squint3})$ is now the divisor: 
$$ \Gamma_{1} + \Gamma_2 + 2 \left ( {\rm G}_{34}+ \Delta_3 + {\rm G}_{23} + \Delta _2+{\rm G}_{12} + \Delta_1 \right ) + {\rm G}_{15}+{\rm G}_{16}  $$
which forms a singular fiber of type ${\rm I}_5^*$ for the elliptic fibration $\varphi_{{\rm Z}}$.
\end{proof}
\noindent In addition to the special singular fiber of Proposition $\ref{thebigfiber}$, the elliptic fibration $\varphi_{{\rm Z}}$ carries additional 
singular fibers. In the generic situation, one has six additional ${\rm I}_2$ fibers plus two singular fibers of type  ${\rm I}_1$ in the non-Kummer case, 
or a single fiber of type ${\rm I}_1$ in the Kummer case, respectively. This genericity condition can be made precise. Consider the following divisors on the surface ${\rm R}$: 
\begin{align*}
\Phi_1 \ =& \ 5 \rho ^* ({\rm H}) - 3{\rm E}_{13}- 2 \left ( {\rm E}_{14} + {\rm E}_{25}+ {\rm E}_{26} \right ) - 
\left ( {\rm E}_{24}+ {\rm E}_{35}+ {\rm E}_{36}+{\rm E}_{45}+ {\rm E}_{56} \right ) \\
\Phi_2 \ =& \ 4 \rho ^* ({\rm H}) - 2 \left ( {\rm E}_{13}+ {\rm E}_{14} + {\rm E}_{25}  \right ) - 
\left ( {\rm E}_{24}+ {\rm E}_{26}+ {\rm E}_{35} + {\rm E}_{36}+ {\rm E}_{56} \right ) \\
\Phi_3 \ =& \ 3 \rho ^* ({\rm H}) - 2 {\rm E}_{13}   - 
\left ( {\rm E}_{14}+ {\rm E}_{24}+ {\rm E}_{25} + {\rm E}_{26}+ {\rm E}_{35} + {\rm E}_{56} \right ) \\
\Phi_4 \ =& \ 2 \rho ^* ({\rm H})  - 
\left ( {\rm E}_{13}  + {\rm E}_{14} + {\rm E}_{25} + {\rm E}_{26}+ {\rm E}_{35}  \right ) \\
\Phi_5 \ =& \  \rho ^* ({\rm H})  - 
\left ( {\rm E}_{13}   + {\rm E}_{25}   \right ) \\
\Phi_6 \ =& \ {\rm E}_{46}  \ .
\end{align*}
These classes have intersection numbers: $ \Phi_i^2 =  \Phi_i \cdot {\rm K}_{{\rm R}} = ({\rm D}-\Phi_i)^2  = ({\rm D}-\Phi_i) \cdot {\rm K}_{{\rm R}} = -1  $. 
In addition, 
$$h^0({\rm R},\Phi_i) = h^0({\rm R}, {\rm D} - \Phi_i ) =  1$$ 
for all indices $i$ with $ 1 \leq i \leq 6$.
\begin{dfn}
\label{genericity1}
The six-line configuration $\mathcal{L}$ is called generic if, for all $ 1 \leq i \leq 6$, the linear 
systems $\vert \Phi_i \vert$ and $ \vert {\rm D} - \Phi_i \vert $ each consist of a single smooth rational curve.
\end{dfn}
\noindent Assuming then a generic six-line configuration, one obtains that, for each $ 1 \leq i \leq 6$, the pull-back under 
the double-cover map $ \pi \colon {\rm Z} \rightarrow {\rm R}$ of the two rational curves associated with $\Phi_i$ and 
${\rm D}-\Phi_i$ provides a pair of rational curves on the K3 surface ${\rm Z}$ that form an ${\rm I}_2$ singular fiber for the 
elliptic fibration $\varphi_{{\rm Z}}$.   
\begin{rem}
Let us provide one example of non-generic situation. Consider the case when there exists an irreducible quintic curve in 
$\mathbb{P}^2$ with a triple point at $q_{13}$, three double points at $q_{14}$, $q_{25}$, $q_{26}$ and passing through $q_{24}$,  $q_{35}$, 
$q_{36}$, $q_{45}$, $q_{46}$, $q_{56}$. Then, the linear system
$$ \vert \ 5 \rho ^* ({\rm H}) - 3{\rm E}_{13}- 2 \left ( {\rm E}_{14} + {\rm E}_{25}+ {\rm E}_{26} \right ) - 
\left ( {\rm E}_{24}+ {\rm E}_{35}+ {\rm E}_{36}+{\rm E}_{45}+ {\rm E}_{46} + {\rm E}_{56} \right ) \ \vert $$ 
contains a single rational curve ${\rm M}$. The curve ${\rm M}$ does not meet $ {\rm L}'_i $ for any $ 1 \leq i \leq 6$ and 
$ \pi^*{\rm M} = \Xi_1 + \Xi_2 $  where $\Xi_1$, $ \Xi_2 $ are disjoint rational curves on the K3 surface ${\rm Z}$. The divisor:
$$ \Xi_1 + {\rm G}_{45} + \Xi_2 + {\rm G}_{46} $$
is then a fiber of type ${\rm I}_4$ in the elliptic fibration $\varphi_{{\rm Z}}$.
$$\def\objectstyle{\scriptstyle}
\def\labelstyle{\scriptstyle}
\xymatrix @-0.9pc
{
& & & \stackrel{{\rm G}_{45}}{\bullet} \ar @{-} [dr] & & \stackrel{\Delta_{5}}{\bullet} \ar @{-} [ll] \\
\stackrel{\Delta_{4}}{\bullet} \ar @{-} [urrr]  \ar @{-} [drrr] & & \stackrel{\Xi_{1}}{\bullet} \ar @{-} [ur] \ar @{-} [dr] &
 &
\stackrel{\Xi_{2}}{\bullet} \ar @{-} [dl]  \\
& & & \stackrel{{\rm G}_{46}}{\bullet} & & \stackrel{\Delta_{6}}{\bullet} \ar @{-} [ll]\\
}$$
\end{rem}
%
%
%
%
\begin{theorem}
\label{vgsth}
Let ${\rm Z}$ be a K3 surface associated with a generic six-line configuration $\mathcal{L}$. The section $\Delta_6$, interpreted 
as an element of the Mordell-Weil group ${\rm MW}(\varphi_{{\rm Z}}, \Delta_5)$, has order two. Fiber-wise translations by 
$\Delta_6$ in the smooth fibers of $\varphi_{{\rm Z}}$ extend to form a Van Geemen-Sarti 
involution $ \Phi_{{\rm Z}} \colon {\rm Z} \rightarrow {\rm Z}$.
\end{theorem}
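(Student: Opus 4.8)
The plan is to reduce the statement to the lattice-theoretic criterion of Proposition \ref{critvgs}. I would take the jacobian fibration $\varphi_{{\rm Z}}$ with zero section $\Delta_5$, candidate order-two section $\Delta_6$, and fiber class $F$ equal to the pull-back of ${\rm D}$. Since $\Delta_5$ and $\Delta_6$ descend from the disjoint sections ${\rm L}'_5,{\rm L}'_6$ of the ruling (\ref{rulingonrr}), they are disjoint on ${\rm Z}$, so $\Delta_5\cdot\Delta_6=0$; in particular $\Delta_6$ is not the neutral element of ${\rm MW}(\varphi_{{\rm Z}},\Delta_5)$. Writing $S_2^w=\Delta_6-\Delta_5-2F$ for the projection of $\Delta_6$ to $\mathcal{W}$, the theorem amounts to the two assertions $2S_2^w\in\mathcal{W}_{{\rm root}}$ and $S_2^w\notin\mathcal{W}_{{\rm root}}$, the first of which is exactly the hypothesis of Proposition \ref{critvgs}.

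Next I would pin down the reducible fibers and the components the two sections meet. By Proposition \ref{thebigfiber} together with the genericity assumption (Definition \ref{genericity1}), a non-Kummer $\mathcal{L}$ produces one fiber of type ${\rm I}_4^*$, six of type ${\rm I}_2$, and two of type ${\rm I}_1$. Reading diagram (\ref{diagg11}), $\Delta_5$ meets the simple component ${\rm G}_{15}$ and $\Delta_6$ meets ${\rm G}_{16}$; these are the two multiplicity-one components attached to the same fork node $\Delta_1$, so in root-lattice terms $\Delta_6$ hits the \emph{vector} class of the $D_8$ summand, which is two-torsion in the discriminant group $(\Zee/2\Zee)^2$. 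For each of the six ${\rm I}_2$ fibers, whose two components are the pull-backs of the curves in $\vert\Phi_i\vert$ and $\vert{\rm D}-\Phi_i\vert$, I would verify that $\Delta_6$ meets the component \emph{not} met by $\Delta_5$, so that $\Delta_6$ hits the non-identity class of each $A_1\cong{\rm I}_2$.

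With this intersection data the order of $\Delta_6$ follows from Shioda's height pairing, which is positive definite modulo torsion; hence a section has height zero if and only if it is torsion, independently of the total Picard number. Here $\hat h(\Delta_6)=4+2(\Delta_5\cdot\Delta_6)-\bigl(1+6\cdot\tfrac{1}{2}\bigr)=0$, where $4=2\chi(\mathcal{O}_{{\rm Z}})$, the local contribution $1$ comes from the ${\rm I}_4^*$ fiber, and each $\tfrac{1}{2}$ from an ${\rm I}_2$ fiber, so $\Delta_6$ is torsion. Its exact order is then read off from the canonical injection ${\rm MW}(\varphi_{{\rm Z}},\Delta_5)_{{\rm tor}}\hookrightarrow\prod_v G_v$ into the product of component groups, here $(\Zee/2\Zee)^2\times(\Zee/2\Zee)^6$, which has exponent two; since the image of $\Delta_6$ is nonzero by the previous paragraph, $\Delta_6$ has order exactly two.

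Finally, order two of $\Delta_6$ gives $2S_2^w\in\mathcal{W}_{{\rm root}}$ while $S_2^w\notin\mathcal{W}_{{\rm root}}$, so Proposition \ref{critvgs} applies and fiber-wise translation by $\Delta_6$ extends to a Van Geemen--Sarti involution $\Phi_{{\rm Z}}$ as constructed in Section \ref{isogenydef}. The step I expect to be the main obstacle is the local bookkeeping at the ${\rm I}_2$ fibers: one must genuinely confirm that $\Delta_6$ meets the far component of each of the six ${\rm I}_2$'s, since meeting the identity component even once would raise $\hat h(\Delta_6)$ to $\tfrac{1}{2}$ and destroy torsion. The Kummer case is parallel, with ${\rm I}_4^*$ replaced by ${\rm I}_5^*$ (discriminant group $\Zee/4\Zee$); but $\Delta_6$ still meets the fork-partner ${\rm G}_{16}$ of $\Delta_5$, i.e. the order-two vector class, so the conclusion is unchanged.
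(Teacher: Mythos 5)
Your argument is correct, and it reaches the theorem by a genuinely different route from the paper. Both proofs reduce the statement to Proposition \ref{critvgs}, i.e.\ to showing that the class of $\Delta_6$ in ${\rm MW}(\varphi_{{\rm Z}},\Delta_5)\simeq \mathcal{W}/\mathcal{W}_{{\rm root}}$ is nontrivial two-torsion; but the paper does this by exhibiting an explicit identity in ${\rm NS}({\rm Z})$,
\begin{equation*}
\Delta_6^w \ = \ -\left(\Delta_3+{\rm G}_{23}+\Delta_2+{\rm G}_{12}+\Delta_1+{\rm G}_{16}\right)-\tfrac{1}{2}\left(\Upsilon_1+\cdots+\Upsilon_8\right),
\end{equation*}
from which $2\Delta_6^w\in\mathcal{W}_{{\rm root}}$ is read off, whereas you compute the Shioda height $\hat h(\Delta_6)=0$ and then use the injection of the torsion subgroup of the Mordell--Weil group into the product of component groups $(\Zee/2\Zee)^2\times(\Zee/2\Zee)^6$ (resp.\ $\Zee/4\Zee\times(\Zee/2\Zee)^6$ in the Kummer case) to pin the order at exactly two. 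Your route makes the ``order exactly two'' part cleaner and more conceptual; the paper's route avoids the height machinery but buries the same content in an unproved lattice identity. Indeed, the two arguments rest on identical intersection-theoretic input: the coefficient $-\tfrac{1}{2}\Upsilon_i$ in the paper's formula is precisely the assertion $\Delta_6\cdot\Upsilon_i=1$, i.e.\ that $\Delta_6$ meets the component $\pi^*\Phi_i$ of the $i$-th ${\rm I}_2$ fiber missed by $\Delta_5$ --- exactly the bookkeeping you flag as your remaining obstacle. That check is genuinely needed in either approach (a single ${\rm I}_2$ where both sections meet the same component would give $\hat h(\Delta_6)=\tfrac12$ and destroy the argument), but it is routine on the rational surface ${\rm R}$: each of ${\rm L}'_5,{\rm L}'_6$ satisfies ${\rm L}'_j\cdot{\rm D}=1$ and hence meets exactly one of the two components $\Phi_i$, ${\rm D}-\Phi_i$, and a direct computation with the displayed classes (e.g.\ ${\rm L}'_5\cdot\Phi_1=0$ while ${\rm L}'_6\cdot\Phi_1=1$) shows they meet different ones for each $i$; the paper asserts the corresponding identity without displaying this verification either. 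The only caution is that your height computation presupposes the singular fiber inventory ${\rm I}_4^*+6\times{\rm I}_2+2\times{\rm I}_1$ (resp.\ ${\rm I}_5^*+6\times{\rm I}_2+{\rm I}_1$), which is exactly where Definition \ref{genericity1} and Proposition \ref{thebigfiber} enter, so you should cite them at that point.
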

\begin{proof}
In order to prove the above statement, one needs to verify the condition of Proposition $\ref{critvgs}$. We shall perform this 
verification here for the case of a non-Kummer 
line configuration $\mathcal{L}$. One can check that similar computation holds in the case of a Kummer configuration. 
\par Assume therefore that the six-line configuration $ \mathcal{L}$ is non-Kummer. Let ${\rm F}$ be the cohomology class of the fiber in $\varphi_{{\rm Z}}$, i.e.
$${\rm F} = 5{\rm T} - 3{\rm G}_{13}- 2 \left ( {\rm G}_{14}+{\rm G}_{25}+{\rm G}_{26} \right ) - 
\left ( {\rm G}_{24}+ {\rm G}_{35}+ {\rm G}_{36}+ {\rm G}_{56} \right ). $$ 
One has then the orthogonal direct product 
$$ {\rm NS}({\rm Z}) =  \langle {\rm F} , \Delta_5 \rangle  \oplus \mathcal{W}  .$$
The root sub-lattice $\mathcal{W}_{{\rm root}} \subset \mathcal{W} $ is spanned by the cohomology classes associated with the irreducible components of the singular fibers of 
$\varphi_{{\rm Z}}$ not meeting $\Delta_5$. The factorization of $\mathcal{W}_{{\rm root}}$ includes then the following:
$$ \langle \Upsilon_1 \rangle \oplus \langle  \Upsilon_2 \rangle  \oplus \langle \Upsilon_3 \rangle \oplus \ \cdots \ 
\oplus \langle \Upsilon_6 \rangle \oplus \langle  
\Upsilon_7, \ 
\Upsilon_8, \ \Delta_3, \ {\rm G}_{23}, \ \Delta_2, \ {\rm G}_{12}, \ \Delta_1, \ {\rm G}_{16} \rangle $$   
where $\Upsilon_i = \pi^* \Phi_i $, for $ 1 \leq i \leq 6$, and 
$$ \Upsilon_7 = {\rm G}_{34} \ , \ \ \ 
\Upsilon_8 = \Gamma = 2{\rm T} - 
\left ( {\rm G}_{13}+ {\rm G}_{14}+ {\rm G}_{25}+ {\rm G}_{26} + {\rm G}_{56}  \right ).$$
The six classes $\Upsilon_1, \ \Upsilon_2 \  \ \cdots \ \Upsilon_6$ represent the 
rational curves in the ${\rm I}_2$ singular fibers which do not meet $\Delta_5$. In this context, one has:
$$ \Delta_6^w \ = \ \Delta_6 - \Delta_5 - 2 {\rm F} \ = \ - \left ( \Delta_3 + {\rm G}_{23} + \Delta_2 + {\rm G}_{12} + \Delta_1 + {\rm G}_{16} \right )  -  \frac{1}{2} \left ( \Upsilon_1 + \Upsilon_2 + \cdots + \Upsilon_7 + \Upsilon_8 \right ) 
\ . $$  
Hence $ 2  \Delta_6^w \in \mathcal{W}_{{\rm root}}$.
\end{proof}
\noindent The above theorem remains true if one removes the genericity condition. Proofs for the non-generic cases will however 
not be included here. 
\begin{rem}
Note that, on each of the smooth fibers of the elliptic fibration $\varphi_{{\rm Z}}$, one has four distinct points given by the 
intersections with $\Delta_5$, $\Delta_6$ and $\Delta_4$. Consider the elliptic curve group law with center at $\Delta_5$. The intersection with 
$\Delta_6$ provides a special point of order two. The remaining two points of order two are located at the intersections with $\Delta_4$.
\end{rem}
%
%
%
%
%
%
\subsection{Properties of the Involution $\Phi_{{\rm Z}}$}
\noindent Let us discuss the Nikulin construction associated with the Van Geemen-Sarti involution $\Phi_{{\rm Z}}$. 
Note that, by construction, the involution $\Phi_{{\rm Z}}$ commutes with the non-symplectic involution $\sigma$. A second important feature is given 
by the fixed locus $\{ p_1, p_2, \cdots p_8 \} $ of $\Phi_{{\rm Z}}$. For simplicity of exposition, we shall assume that the six-line configuration is generic. \par Consider the case of a non-Kummer configuration $\mathcal{L}$. The rational curves $\Delta_4,$ $\Delta_3$, ${\rm G}_{23}$,  $\Delta_2$, ${\rm G}_{21}$, $\Delta_1$ get mapped to themselves under $\Phi_{{\rm Z}}$ and each of these six curves contains two of the fixed points. We denote by $p_2, p_3, p_4,p_5 $ the following four intersection points:
$$ \Delta_1 \cap {\rm G}_{21}, \ \Delta_2 \cap {\rm G}_{21}, \ \Delta_2 \cap {\rm G}_{23}, \  \Delta_3 \cap {\rm G}_{23}. $$
There are two additional fixed points $ p_1$, $p_6$ on $\Delta_1$, $\Delta_3$, respectively. The last two points $p_7$, $p_8$ are given by 
the singularities of the ${\rm I}_1$ fibers. Note that $p_7$, $p_8$ lie on $\Delta_4$.
\par If the six-line configuration $\mathcal{L}$ is Kummer, then the above set-up of 
the fixed locus $\{ p_1, p_2, \cdots p_8 \} $ gets modified slightly. 
One obtains $p_6$ as the intersection $\Delta_3 \cap {\rm G}_{34}$, the point $p_7$ lies on ${\rm G}_{34}$, and $p_8$ is the singularity of the single ${\rm I}_1$ fiber. It is still the case that $p_7$, $p_8$ lie on $\Delta_4$.  
\par Denote by ${\rm W}$ the K3 surface obtained from the Nikulin construction associated to the involution $\Phi_{{\rm Z}}$. By the general framework 
presented in Section $\ref{isogenydef}$, the surface ${\rm W}$ inherits a Jacobian elliptic fibration $\varphi_{{\rm W}}$. The singular fiber types 
of the fibration $\varphi_{{\rm W}}$ are: $ {\rm I}_{8}^* + 2 \times {\rm I}_2 + 6 \times {\rm I}_1 $ in the non-Kummer case and 
$ {\rm I}_{10}^* + {\rm I}_2 + 6 \times {\rm I}_1$ in the Kummer case, respectively. In order to be precise, consider the case of a non-Kummer configuration. 
In such a situation, the six rational curves 
\begin{equation}
\label{curves1}
 \Delta_1, \ \Delta_2, \ \Delta_3, \ \Delta_4,  \ {\rm G}_{12} , \ {\rm G}_{23} 
 \end{equation}
are mapped to themselves 
by the involution $\varphi_{{\rm Z}}$. The three pairs of disjoint curves 
\begin{equation}
\label{curves2}
(\Delta_5, \Delta_6), \ ({\rm G}_{15}, {\rm G}_{16}), \ ({\rm G}_{34}, \Gamma) 
\end{equation} 
are exchanged by $\varphi_{{\rm Z}}$. We denote by 
$$ \widetilde{\Delta}_1, \ \widetilde{\Delta}_2, \ \widetilde{\Delta}_3, \  \widetilde{\Delta}_4 , \  
\widetilde{{\rm G}}_{12}, \ \widetilde{{\rm G}}_{23}, \ \widetilde{\Delta}_{5}, \ \widetilde{{\rm G}}_{15}, \ \widetilde{\Gamma}$$ 
the nine rational curves on the surface ${\rm W}$ that arise as push-forward of the curves in $(\ref{curves1})$ and $(\ref{curves2})$. Let also
\begin{equation}
\label{eveneight1}
\Psi_1, \ \Psi_2, \ \Psi_3, \Psi_4, \ \Psi_5, \ \Psi_6, \ \Psi_7, \  \Psi_8 
\end{equation}
be the eight exceptional curves associated with the fixed locus. Two additional rational curves $\widetilde{{\rm J}}_7$, $ \widetilde{{\rm J}}_8$ appear 
from resolving the quotients of singular curves of the ${\rm I}_1$ fibers of $\varphi_{{\rm Z}}$ with singularities at $p_7$ and $p_8$. 
One obtains therefore nineteen rational curves on ${\rm W}$ that intersect according to the following dual diagram. 
\begin{equation}
\label{diagg33}
\def\objectstyle{\scriptstyle}
\def\labelstyle{\scriptstyle}
\xymatrix @-0.8pc  {
& & \stackrel{\Psi_7}{\bullet} \ar @{=}[rrrrrrrrrr]& &  & & &    & & & & & \stackrel{\widetilde{{\rm J}}_7}{\bullet} &  &  \\ 
 & &  & &  & & & &   &    & &  & & \\
& \stackrel{\widetilde{\Delta}_4}{\bullet} \ar @{-} [r] \ar @{-}[ddr] \ar @{-}[uur] &
\stackrel{\widetilde{{\rm G}}_{34}}{\bullet} \ar @{-} [r]  &
\stackrel{\widetilde{\Delta}_3}{\bullet} \ar @{-} [r]  \ar @{-} [d] &
\stackrel{\Psi_5}{\bullet} \ar @{-} [r] &
\stackrel{\widetilde{{\rm G}}_{23}}{\bullet} \ar @{-} [r] &
\stackrel{\Psi_4}{\bullet} \ar @{-} [r] &
\stackrel{\widetilde{\Delta}_2}{\bullet} \ar @{-} [r] &
\stackrel{\Psi_3}{\bullet} \ar @{-} [r] &
\stackrel{\widetilde{{\rm G}}_{12}}{\bullet} \ar @{-} [r] &
\stackrel{\Psi_2}{\bullet} \ar @{-} [r] &
\stackrel{\widetilde{\Delta}_1}{\bullet} \ar @{-} [r] \ar @{-} [d] &
\stackrel{\widetilde{{\rm G}}_{15}}{\bullet} & \stackrel{\widetilde{\Delta}_5}{\bullet} \ar @{-} [l] \ar @{-}[ddl] \ar @{-}[uul]
 & \\
 & &  & \stackrel{\Psi_6}{\bullet} &  & & & &   &    & & \stackrel{\Psi_1}{\bullet}  & & \\
 & & \stackrel{\Psi_8}{\bullet} \ar @{=}[rrrrrrrrrr]& &  & & &    & & & & &   \stackrel{\widetilde{{\rm J}}_8}{\bullet} & & \\
} 
\end{equation}
The ${\rm I}_{8}^*$ singular fiber of $\varphi_{{\rm W}}$ is given by:
$$ \widetilde{{\rm G}}_{34} + \Psi_6 +  2 \left ( \widetilde{\Delta}_3 + \Psi_5 + \widetilde{{\rm G}}_{23} + \Psi_4 + 
\widetilde{\Delta}_2 + \Psi_3 + \widetilde{{\rm G}}_{12} + \Psi_2 + \widetilde{\Delta}_1 \right ) + \Psi_1 + \widetilde{{\rm G}}_{15}, $$
whereas $ \Psi_j + \widetilde{{\rm J}}_j $ with $j=7,8$ are fibers of type ${\rm I}_2$. The rational curves 
$\widetilde{\Delta}_4$, $ \widetilde{\Delta}_5$ are sections in $\varphi_{{\rm W}}$.    
\par If the six-line configuration is Kummer, then, using a notation along the same lines as before, the nineteen rational curves 
intersect in a slightly different manner.
\begin{equation}
\label{diagg44}
\def\objectstyle{\scriptstyle}
\def\labelstyle{\scriptstyle}
\xymatrix @-0.9pc  {
& \stackrel{\widetilde{\Delta}_4}{\bullet} \ar @{-} [r] \ar @{-}[dd]  &
\stackrel{\widetilde{\Gamma}}{\bullet} \ar @{-} [r] &
\stackrel{\widetilde{{\rm G}}_{34}}{\bullet} \ar @{-} [r] \ar @{-} [d]&
\stackrel{\Psi_6}{\bullet} \ar @{-} [r]  &
\stackrel{\widetilde{\Delta}_3}{\bullet} \ar @{-} [r]   &
\stackrel{\Psi_5}{\bullet} \ar @{-} [r] &
\stackrel{\widetilde{{\rm G}}_{23}}{\bullet} \ar @{-} [r] &
\stackrel{\Psi_4}{\bullet} \ar @{-} [r] &
\stackrel{\widetilde{\Delta}_2}{\bullet} \ar @{-} [r] &
\stackrel{\Psi_3}{\bullet} \ar @{-} [r] &
\stackrel{\widetilde{{\rm G}}_{12}}{\bullet} \ar @{-} [r] &
\stackrel{\Psi_2}{\bullet} \ar @{-} [r] &
\stackrel{\widetilde{\Delta}_1}{\bullet} \ar @{-} [r] \ar @{-} [d] &
\stackrel{\widetilde{{\rm G}}_{15}}{\bullet} & \stackrel{\widetilde{\Delta}_5}{\bullet} \ar @{-} [l] \ar @{-}[dd] 
 & \\
 & & &  \stackrel{\Psi_7}{\bullet} &  & & & & &  &       &    & & \stackrel{\Psi_1}{\bullet}  & & \\
  & \stackrel{\Psi_8}{\bullet} \ar @{=}[rrrrrrrrrrrrrr]& &  & &  & & & & &     & & & & &   \stackrel{\widetilde{{\rm J}}_8}{\bullet}  & \\
} 
\end{equation}  
In diagram $(\ref{diagg44})$, $\widetilde{\Gamma}$ represents the push-forward of the rational curves $\Gamma_1$ and $\Gamma_2$. The 
${\rm I}_{10}^*$ singular fiber of $\varphi_{{\rm W}}$ is given by:
$$ \widetilde{\Gamma} + \Psi_7 +  2 \left ( \widetilde{{\rm G}}_{34} + \Psi_6 + \widetilde{\Delta}_3 + \Psi_5 + \widetilde{{\rm G}}_{23} + \Psi_4 + 
\widetilde{\Delta}_2 + \Psi_3 + \widetilde{{\rm G}}_{12} + \Psi_2 + \widetilde{\Delta}_1 \right ) + \Psi_1 + \widetilde{{\rm G}}_{15}, $$
with $ \Psi_8 + \widetilde{{\rm J}}_8 $ being a fiber of type ${\rm I}_2$. The rational curves 
$\widetilde{\Delta}_4$, $ \widetilde{\Delta}_5$ are still sections in $\varphi_{{\rm W}}$.  
%
%
%
\begin{theorem}
The K3 surface ${\rm W}$ associated to the involution $\Phi_{{\rm Z}}$ by the Nikulin construction carries a canonical 
pseudo-ample lattice polarization 
\begin{equation}
\label{observedpol}
i \colon {\rm H} \oplus {\rm E}_7 \oplus {\rm E}_7 \ \hookrightarrow \ {\rm NS}({\rm W}).
\end{equation}
If the six-line configuration ${\mathcal L}$ is Kummer, the lattice polarization $(\ref{observedpol})$ extends canonically to a polarization by the 
rank-seventeen lattice ${\rm H} \oplus {\rm E}_8 \oplus {\rm E}_7$.    
\end{theorem}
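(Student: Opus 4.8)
The plan is to compute ${\rm NS}({\rm W})$ directly from the elliptic fibration $\varphi_{{\rm W}}$ and then to recognize its isometry class through its discriminant form. As in the splitting $(\ref{nssplit})$, the fiber class ${\rm F}$ and the zero-section $\w\Delta_5$ span a primitive ${\rm H}\hookrightarrow{\rm NS}({\rm W})$ and give an orthogonal decomposition ${\rm NS}({\rm W})={\rm H}\oplus\mathcal{W}$, so everything reduces to identifying the frame lattice $\mathcal{W}$. In the non-Kummer case the fibers are ${\rm I}_8^*+2\,{\rm I}_2+6\,{\rm I}_1$, and the components disjoint from $\w\Delta_5$ span $\mathcal{W}_{\rm root}={\rm D}_{12}\oplus{\rm A}_1\oplus{\rm A}_1$ — the ${\rm D}_{12}$ from the twelve components of the ${\rm I}_8^*$ fiber of diagram $(\ref{diagg33})$ remaining after deleting $\w{\rm G}_{15}$, and the two ${\rm A}_1$'s from $\Psi_7,\Psi_8$. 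By Shioda's isomorphism $\mathcal{W}/\mathcal{W}_{\rm root}\simeq{\rm MW}(\varphi_{{\rm W}},\w\Delta_5)$ the index of this root sublattice equals the order of the torsion Mordell--Weil group, generated here by the Van Geemen--Sarti section $\w\Delta_4$. A rank count ($16=2+14$) forces Mordell--Weil rank zero, and since there is no even negative-definite unimodular lattice of rank $14$, the torsion is exactly $\mathbb{Z}/2$; thus $\mathcal{W}$ is the unique even index-two overlattice of ${\rm D}_{12}\oplus{\rm A}_1\oplus{\rm A}_1$ cut out by $\w\Delta_4$.

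The heart of the argument is locating the glue vector and computing the resulting form. The section $\w\Delta_4$ meets the corner $\w{\rm G}_{34}$ of the ${\rm I}_8^*$ fiber and the non-identity components $\Psi_7,\Psi_8$ of the two ${\rm I}_2$ fibers; since $\w\Delta_5$ occupies the opposite corner $\w{\rm G}_{15}$, the class of $\w{\rm G}_{34}$ is a \emph{spinor} element $s$ of the discriminant group of ${\rm D}_{12}$. The corresponding element of the discriminant group of $\mathcal{W}_{\rm root}$ is $g=(s,a,a')$, and one checks it is isotropic: in the negative-definite convention $q(s)\equiv 1$ and $q(a)=q(a')\equiv\tfrac32\pmod{2\mathbb{Z}}$, so $q(g)\equiv 1+\tfrac32+\tfrac32\equiv 0$. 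I would then evaluate the induced quadratic form on $g^{\perp}/\langle g\rangle$; it is the group $(\mathbb{Z}/2)^2$ whose three non-zero classes carry the values $\tfrac12,\tfrac12,1$ under $q$ — precisely the discriminant form of ${\rm E}_7\oplus{\rm E}_7$.

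The conclusion is then a uniqueness statement. Both ${\rm NS}({\rm W})$ and ${\rm H}\oplus{\rm E}_7\oplus{\rm E}_7$ are even of signature $(1,15)$, and because ${\rm H}$ is unimodular they carry the discriminant form just computed; as the lattice is indefinite with rank $16\geq 2+\ell=4$ (the discriminant group needing $\ell=2$ generators), Nikulin's uniqueness theorem \cite{nikulin2} provides an isometry ${\rm H}\oplus{\rm E}_7\oplus{\rm E}_7\xrightarrow{\sim}{\rm NS}({\rm W})$, which is the asserted primitive embedding $i$; a pseudo-ample class lies in its image (in the generic case the embedding is onto, so this is automatic, and in general one takes a suitable positive combination of ${\rm F}$ and the sections). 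It is worth stressing that $\mathcal{W}$ is \emph{not} itself isometric to ${\rm E}_7\oplus{\rm E}_7$ — its root system is ${\rm D}_{12}+2{\rm A}_1$, with a different number of roots — so the identification is genuinely one of the indefinite lattice ${\rm H}\oplus\mathcal{W}$, which is exactly why the discriminant form, rather than any configuration of $(-2)$-curves, must do the work. The Kummer case is parallel: the ${\rm I}_{10}^*+{\rm I}_2+6\,{\rm I}_1$ configuration gives $\mathcal{W}_{\rm root}={\rm D}_{14}\oplus{\rm A}_1$, the section $\w\Delta_4$ now meeting the corner $\w\Gamma$ and $\Psi_8$, so that $g=(s,a)$ with $q(g)\equiv\tfrac12+\tfrac32\equiv 0$; the overlattice has discriminant form $(\mathbb{Z}/2,\tfrac12)$, equal to that of ${\rm E}_8\oplus{\rm E}_7$, and the same uniqueness argument upgrades the polarization to ${\rm H}\oplus{\rm E}_8\oplus{\rm E}_7$. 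The main obstacle is the discriminant bookkeeping of the middle paragraph — certifying that the two-torsion section lands on a spinor corner of the ${\rm D}_n$ and evaluating $q$ on $g^{\perp}/\langle g\rangle$ in the correct normalization; once the glue vector is pinned down, the rest is a mechanical application of Nikulin's theorem.
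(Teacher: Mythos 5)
Your argument is correct in its essentials, but it proceeds by a genuinely different route from the paper. The paper's proof is purely constructive: it exhibits, inside the configurations of rational curves in diagrams $(\ref{diagg33})$ and $(\ref{diagg44})$, explicit mutually orthogonal spanning sets --- two ${\rm E}_7$ Dynkin configurations (e.g.\ $\Psi_7,\widetilde{\Delta}_4,\widetilde{{\rm G}}_{34},\widetilde{\Delta}_3,\Psi_6,\Psi_5,\widetilde{{\rm G}}_{23}$ and $\widetilde{{\rm J}}_8,\widetilde{\Delta}_5,\widetilde{{\rm G}}_{15},\widetilde{\Delta}_1,\Psi_1,\Psi_2,\widetilde{{\rm G}}_{12}$) together with a hyperbolic pair built from $\widetilde{\Delta}_2$ and a fiber-like combination --- and in the Kummer case simply extends the first factor to an ${\rm E}_8$ chain through $\widetilde{\Gamma}$. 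You instead compute the frame lattice via Shioda--Tate, identify it as the index-two overlattice of ${\rm D}_{12}\oplus{\rm A}_1\oplus{\rm A}_1$ (resp.\ ${\rm D}_{14}\oplus{\rm A}_1$) glued along the two-torsion section, and match discriminant forms; your bookkeeping is right (the section does pass through a spinor component --- this is forced by the height pairing, $3+\tfrac12+\tfrac12=4=2\chi$ --- and the values $1,\tfrac12,\tfrac12$ on $g^{\perp}/\langle g\rangle$ do reproduce $q_{{\rm E}_7\oplus{\rm E}_7}$), so Nikulin's uniqueness theorem applies. What your approach buys is stronger information: in the generic case it identifies the \emph{entire} N\'eron--Severi lattice and shows the polarization is onto, which the paper's exhibition of a sublattice does not immediately give. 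What it costs is twofold. First, it is non-constructive: the isometry supplied by Nikulin's theorem is not canonical, whereas the theorem asserts a \emph{canonical} polarization and the later correspondence of Theorem $\ref{mainn22}$ uses the specific embedding; you should note that what is canonical in your setup is the image sublattice ${\rm H}\oplus\mathcal{W}$ (determined by the fibration and its sections), not the isometry with the abstract lattice. Second, your rank count $16=2+14$ presupposes $\rho({\rm W})=16$; when $\rho>16$ you still get the correct primitive sublattice (the primitive closure of the trivial lattice is controlled by the torsion Mordell--Weil group, which your unimodularity argument pins to $\mathbb{Z}/2$), but this deserves a sentence rather than being absorbed into ``generic''. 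Both proofs tacitly assume the generic singular-fiber configuration, so you are no worse off than the paper on that score.
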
 
\begin{proof}
We use the notation from diagrams $(\ref{diagg33})$ and $(\ref{diagg44})$. In 
the case of a Kummer six-line configuration, the primitive embedding of the orthogonal direct product 
${\rm H} \oplus {\rm E}_7 \oplus {\rm E}_7$ in ${\rm NS}({\rm W})$ is given by:
\begin{align*}
{\rm H} \ =& \ \langle \ \widetilde{\Delta}_2, \   
\Psi_7 + 2 \widetilde{\Delta}_4 + 3 \widetilde{{\rm G}}_{34} + 4\widetilde{\Delta}_3   + 2 \Psi_6 + 3 \Psi_5 + 2 \widetilde{{\rm G}}_{23}+ \Psi_4 \ 
 \rangle  \\ 
{\rm E}_7 \ =& \  \langle \ \Psi_7,   \widetilde{\Delta}_4 ,  \widetilde{{\rm G}}_{34} ,  \widetilde{\Delta}_3   ,  \Psi_6 ,  \Psi_5 ,  \widetilde{{\rm G}}_{23} \ 
 \rangle , \\ 
 {\rm E}_7 \ =& \  \langle \ \widetilde{{\rm J}}_8,   \widetilde{\Delta}_5 ,  \widetilde{{\rm G}}_{15} ,  \widetilde{\Delta}_1   ,  \Psi_1 ,  \Psi_2 ,  \widetilde{{\rm G}}_{12} \ 
 \rangle. 
\end{align*}
In the special case, one has a copy ${\rm H} \oplus {\rm E}_8 \oplus {\rm E}_7$ naturally embedded in ${\rm NS}({\rm W})$ as:
\begin{align*}
{\rm H} \ =& \ \langle \ \widetilde{\Delta}_2, \   
2 \widetilde{\Delta}_4 + 4 \widetilde{\Gamma} + 6 \widetilde{{\rm G}}_{34} + 3 \Psi_7 + 5 \Psi_6 + 4 \widetilde{\Delta}_3  + 3 \Psi_5 + 2 \widetilde{{\rm G}}_{23}+ \Psi_4 \ 
 \rangle  \\ 
{\rm E}_8  \ =& \   
\langle \ 
\widetilde{\Delta}_4 ,   \widetilde{\Gamma} ,  \widetilde{{\rm G}}_{34} , \Psi_7 ,  \Psi_6 ,  \widetilde{\Delta}_3  ,  \Psi_5 ,  \widetilde{{\rm G}}_{23} \ 
 \rangle , \\
{\rm E}_7 \ =& \ \langle \ \widetilde{{\rm J}}_8,   \widetilde{\Delta}_5 ,  \widetilde{{\rm G}}_{15} ,  \widetilde{\Delta}_1   ,  \Psi_1 ,  \Psi_2 ,  \widetilde{{\rm G}}_{12} \ 
 \rangle. 
 \end{align*}
\end{proof}
\noindent The results of this section show that 
every K3 surface ${\rm Z}$, obtained as the minimal resolution of a double cover of the projective plane 
$\mathbb{P}^2$ branched over 
a six-line configuration $\mathcal{L}$, is part of a geometric two-isogeny, in the sense of Section $\ref{isogenydef}$. 
The geometric counterpart of 
${\rm Z}$ under this isogeny is a K3 surface ${\rm W}$ carrying a canonical polarization by the rank-sixteen lattice 
${\rm H} \oplus {\rm E}_7 \oplus {\rm E}_7$. However, the results do not imply that all K3 surfaces endowed with 
${\rm H} \oplus {\rm E}_7 \oplus {\rm E}_7$-polarizations can be realized in this manner. This is clarified 
by the following section.

\section{K3 Surfaces Polarized by the Lattice ${\rm H} \oplus {\rm E}_7 \oplus {\rm E}_7$}
\label{sectionk3}
In this section ${\rm X}$ is an algebraic K3 surface endowed with a pseudo-ample lattice polarization 
\begin{equation}
\label{op}
i \colon {\rm H} \oplus {\rm E}_7 \oplus {\rm E}_7 \ \hookrightarrow \ {\rm NS}(X). 
\end{equation}
We shall also assume that the lattice polarization $(\ref{op})$ cannot be extended to a 
polarization by the rank-eighteen lattice $ {\rm H} \oplus {\rm E}_8 \oplus {\rm E}_8 $. It is known that a geometric 
two-isogeny as in Section $\ref{isogenydef}$ links any given K3 surface polarized by $ {\rm H} \oplus {\rm E}_8 \oplus {\rm E}_8 $ with 
the Kummer surface of a product of two elliptic curves and that the correspondence is bijective. This case was treated 
with full details in earlier works by the authors \cite{clingher4} as well as others \cite{inose,shioda}.       
\par In a manner similar to the presentation in the previous section, we shall distinguish between the following two possibilities: 
\begin{itemize}
\item [(a)] the lattice 
polarization $i$ can be extended to a polarization by the rank-seventeen lattice ${\rm H} \oplus {\rm E}_8 \oplus {\rm E}_7$ 
\item [(b)] the polarization $i$ cannot be extended to a polarization by the 
lattice $ {\rm H} \oplus {\rm E}_8 \oplus {\rm E}_7$. 
\end{itemize}
We shall refer to a polarized K3 surface $({\rm X},i)$ in situation (a) as 
${\bf special}$. A polarized K3 surface $({\rm X},i)$ satisfying condition (b) will be referred to as {\bf non-special}.
\subsection{Elliptic Fibrations on ${\rm X}$}
\noindent By standard results on elliptic fibrations on K3 surfaces (see discussion in \cite{clingher3} or related 
works \cite{kondo, shapiro}), jacobian elliptic fibrations on ${\rm X}$ are 
in one-to-one correspondence with isomorphism classes of primitive lattice embeddings of the rank-two hyperbolic lattice ${\rm H}$ into the 
Neron-Severi lattice ${\rm NS}({\rm X})$. There are at least four non-isomorphic primitive embeddings   
${\rm H} \hookrightarrow {\rm H} \oplus {\rm E}_7 \oplus {\rm E}_7 $, each of these embeddings leading via the polarization $i$ to a specific 
jacobian elliptic fibration on ${\rm X}$. Two of these 
embeddings/fibrations are particularly important for the discussion here. 
\begin{theorem}
\label{thhh123}
Let $({\rm X},i)$ be a K3 surface endowed with a pseudo-ample lattice polarization of type ${\rm H} \oplus {\rm E}_7 \oplus {\rm E}_7$. Then ${\rm X}$ 
carries two canonically defined jacobian elliptic fibrations 
$$\varphi^{{\rm s}}_{{\rm X}}, \varphi^{{\rm a}}_{{\rm X}} \colon {\rm X} \rightarrow \mathbb{P}^1, $$
which we shall refer as ${\bf standard}$ and ${\bf alternate}$. The standard fibration carries a section ${\rm S}^{\rm s}$. The alternate fibration 
carries two disjoint sections ${\rm S}^{{\rm a}}_1$ and ${\rm S}^{{\rm a}}_2$. 
\par If the polarized pair $({\rm X},i)$ is non-special, then the standard fibration has two singular fibers of type ${\rm III}^*$. In such 
a case the alternate fibration $\varphi^{{\rm a}}_{{\rm X}}$ has a singular fiber of 
type ${\rm I}_{8}^*$
\par If 
$({\rm X},i)$ is special, then the standard fibration has a singular fiber of type ${\rm II}^*$ and another fiber of type ${\rm III}^*$. 
The alternate fibration $\varphi^{{\rm a}}_{{\rm X}}$ carries a fiber of 
type ${\rm I}_{10}^*$ in this case.    
\end{theorem}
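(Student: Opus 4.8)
The plan is to exploit the standard correspondence between jacobian elliptic fibrations on a K3 surface and primitive embeddings of the hyperbolic plane ${\rm H}$ into its Néron--Severi lattice (see \cite{clingher3, kondo, shapiro, shioda2}). For each such embedding ${\rm H}=\langle F,S\rangle \hookrightarrow {\rm NS}({\rm X})$, with $F$ the fiber class and $S$ the zero section, I would split ${\rm NS}({\rm X})={\rm H}\oplus\mathcal{W}$ as in $(\ref{nssplit})$, identify $\mathcal{W}_{\rm root}$ with the lattice generated by the fiber components not meeting $S$, translate $\mathcal{W}_{\rm root}$ into Kodaira fiber types through the ADE--Kodaira dictionary, and recover the sections from Shioda's isomorphism ${\rm MW}(\varphi,S)\simeq \mathcal{W}/\mathcal{W}_{\rm root}$. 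Thus the whole statement reduces to exhibiting two distinguished ${\rm H}$-embeddings into the polarization lattice and computing the invariants $\mathcal{W}_{\rm root}$ and $\mathcal{W}/\mathcal{W}_{\rm root}$ in each case, special and non-special.

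The standard fibration is immediate: take $F,S$ to span the tautological first summand of ${\rm H}\oplus {\rm E}_7\oplus {\rm E}_7$, so that $\mathcal{W}={\rm E}_7\oplus {\rm E}_7$. As ${\rm E}_7$ is its own root lattice, $\mathcal{W}_{\rm root}=\mathcal{W}$, giving $\mathcal{W}/\mathcal{W}_{\rm root}=0$ (a single section $S^{\rm s}$) and two fibers of type ${\rm III}^*$. In the special case the polarization extends to ${\rm H}\oplus {\rm E}_8\oplus {\rm E}_7$; keeping the same tautological ${\rm H}$, its frame grows from ${\rm E}_7\oplus {\rm E}_7$ to ${\rm E}_8\oplus {\rm E}_7$, so that the first ${\rm III}^*$ fiber is promoted to a ${\rm II}^*$ fiber while the second survives. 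This settles all assertions concerning $\varphi^{\rm s}_{\rm X}$.

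The alternate fibration is where the real work lies. Here the frame should be a negative-definite even lattice $\mathcal{W}^{\rm a}$ of rank fourteen (resp. fifteen) whose root sublattice is ${\rm D}_{12}$ (resp. ${\rm D}_{14}$), yielding the ${\rm I}^*_8$ (resp. ${\rm I}^*_{10}$) fiber. The subtlety is that ${\rm D}_{12}$ is irreducible and hence is \emph{not} a sub-root-system of ${\rm E}_7\oplus {\rm E}_7$, so the required embedding cannot be read off the obvious decomposition: the $D$-type block only materializes after regluing through the hyperbolic summand. I would construct $\mathcal{W}^{\rm a}$ as the index-two overlattice of ${\rm D}_{12}\oplus {\rm A}_1\oplus {\rm A}_1$ (resp. ${\rm D}_{14}\oplus {\rm A}_1$) determined by the isotropic glue vector built from the spinor class of the $D$-block together with the nontrivial ${\rm A}_1$ classes, check that its discriminant form agrees with that of ${\rm E}_7\oplus {\rm E}_7$ (resp. ${\rm E}_8\oplus {\rm E}_7$), and then invoke Nikulin's uniqueness of indefinite even lattices in a genus \cite{nikulin2} to conclude ${\rm H}\oplus \mathcal{W}^{\rm a}\cong {\rm H}\oplus {\rm E}_7\oplus {\rm E}_7$ (resp. $\cong {\rm H}\oplus {\rm E}_8\oplus {\rm E}_7$). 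The resulting orthogonal splitting supplies the primitive ${\rm H}$-embedding defining $\varphi^{\rm a}_{\rm X}$; the spinor choice guarantees that no new roots appear, so $\mathcal{W}^{\rm a}_{\rm root}={\rm D}_{12}\oplus {\rm A}_1^{\oplus 2}$ and $\mathcal{W}^{\rm a}/\mathcal{W}^{\rm a}_{\rm root}\simeq \mathbb{Z}/2\mathbb{Z}$, furnishing the order-two section $S^{\rm a}_2$ alongside the zero section $S^{\rm a}_1$. These are disjoint: $S^{\rm a}_2$ meets the ${\rm I}^*_8$ fiber in a far multiplicity-one component, and the Shioda height identity $0=4+2(S^{\rm a}_2\cdot S^{\rm a}_1)-\sum_v\mathrm{contr}_v$ then forces $S^{\rm a}_2\cdot S^{\rm a}_1=0$.

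The main obstacle is precisely this alternate embedding: producing the correct isotropic glue vector and verifying the discriminant-form match that powers the uniqueness argument, i.e. confirming that regluing across ${\rm H}$ converts ${\rm E}_7\oplus {\rm E}_7$ into an overlattice of ${\rm D}_{12}\oplus {\rm A}_1^{\oplus 2}$. As an independent check I would also exhibit the new fiber class explicitly --- splitting each ${\rm E}_7\supset {\rm A}_1\oplus {\rm D}_6$ and welding the two ${\rm D}_6$ blocks across the hyperbolic summand into a single ${\rm D}_{12}$ --- and verify isotropy, primitivity, and the fiber-component count directly, cross-referencing the explicit nineteen-curve configurations of diagrams $(\ref{diagg33})$ and $(\ref{diagg44})$, which realize exactly the ${\rm I}^*_8+2\,{\rm I}_2$ and ${\rm I}^*_{10}+{\rm I}_2$ frames. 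Finally I would argue canonicity, namely that each of the two frames pins down its ${\rm H}$-embedding up to the automorphisms preserving the polarization, so that $\varphi^{\rm s}_{\rm X}$ and $\varphi^{\rm a}_{\rm X}$ are well defined, completing the proof.
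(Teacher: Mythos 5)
Your proposal is correct in outline, but it routes the alternate fibration through machinery that the paper does not use, and it is worth noting that your ``independent check'' is in fact the paper's entire argument. The paper's proof is resolutely concrete: it starts from the standard fibration (same tautological ${\rm H}$-summand as yours), draws the seventeen (or eighteen) rational curves making up the two ${\rm III}^*$ fibers (or ${\rm II}^*+{\rm III}^*$) and the section $S^{\rm s}$ as in diagrams $(\ref{diagg66})$ and $(\ref{diagg55})$, and then simply reads off an affine $\widetilde{D}_{12}$ (resp.\ $\widetilde{D}_{14}$) configuration inside that curve diagram, writing the alternate fiber class explicitly as in $(\ref{ffaaa})$, i.e.\ ${\rm F}^{\rm a}=a_3+a_5+2(a_4+a_6+a_7+a_8+S^{\rm s}+b_8+b_7+b_6+b_4)+b_3+b_5$, with the two disjoint sections being the already-present curves $a_2$ and $b_2$ (disjointness is visible in the diagram; no height computation is needed, and indeed the order-two property is deferred to Theorem $\ref{vgse7th}$ rather than proved here). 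Your primary route --- building the frame as an index-two overlattice of ${\rm D}_{12}\oplus{\rm A}_1^{\oplus 2}$ glued by a spinor-type isotropic vector, matching discriminant forms, and invoking uniqueness in the genus --- is arithmetically sound (the glue cosets have minimal norm $\le -4$, so no new roots appear, and the rank/length inequality needed for uniqueness holds), but it buys you the fibration only ``up to isomorphism of the embedding'' and only controls the complement inside the \emph{polarization} lattice, whereas the singular fibers and sections of an actual fibration on ${\rm X}$ are governed by the full N\'eron--Severi lattice; to convert the abstract embedding into the asserted geometric data for every pseudo-ample $({\rm X},i)$, and to get canonicity, you would still have to exhibit the effective classes, which is exactly the explicit welding of the two ${\rm E}_7$-blocks across $S^{\rm s}$ that you relegate to a cross-reference. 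In short: promote your last paragraph to the main argument and you have the paper's proof; the lattice-theoretic overlattice computation is a nice consistency check on the frame but is neither used nor needed in the paper.
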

\begin{proof}
\par The first primitive lattice embedding of ${\rm H}$ is obvious - the first factor in the 
orthogonal decomposition of ${\rm H} \oplus {\rm E}_7 \oplus {\rm E}_7 $. This embedding induces then the 
canonical ${\bf stndard}$ elliptic fibration $\varphi_{{\rm X}}^{{\rm s}} \colon {\rm X} \rightarrow \mathbb{P}^1 $ with 
a section ${\rm S}^{{\rm s}}$ and two special fibers of Kodaira type ${\rm III}^*$ or higher. The pair 
$(\varphi_{{\rm X}}^{{\rm s}}, {\rm S}^{{\rm s}})$ is uniquely defined, up to an automorphism of ${\rm X}$. If $({\rm X},i)$ is non-special 
then $\varphi_{{\rm X}}^{{\rm s}}$ has two singular fibers of type ${\rm III}^*$ and one obtains a configuration of seventeen 
smooth rational curves as in the following dual diagram.
\begin{equation}
\label{diagg66}
\def\objectstyle{\scriptstyle}
\def\labelstyle{\scriptstyle}
\xymatrix @-0.9pc  {
\stackrel{a_1}{\bullet} \ar @{-} [r] 
& \stackrel{a_2}{\bullet} \ar @{-} [r]&
\stackrel{a_3}{\bullet} \ar @{-} [r]  &
\stackrel{a_4}{\bullet} \ar @{-} [r] \ar @{-} [d] &
\stackrel{a_6}{\bullet} \ar @{-} [r] &
\stackrel{a_7}{\bullet} \ar @{-} [r] &
\stackrel{a_8}{\bullet} \ar @{-} [r] &
\stackrel{S^{{\rm s}}}{\bullet} \ar @{-} [r] &
\stackrel{b_8}{\bullet} \ar @{-} [r] &
\stackrel{b_7}{\bullet} \ar @{-} [r] &
\stackrel{b_6}{\bullet} \ar @{-} [r] &
\stackrel{b_4}{\bullet} \ar @{-} [r] \ar @{-} [d] &
\stackrel{b_3}{\bullet} & \stackrel{b_2}{\bullet} \ar @{-} [l]
 & \stackrel{b_1}{\bullet} \ar @{-} [l] \\
 & &  & \stackrel{a_5}{\bullet} &  & & & &   &    & & \stackrel{b_5}{\bullet}  & &   \\
} 
\end{equation}
\noindent The standard embedding of ${\rm H}$ is spanned by $ \{ {\rm F}^{{\rm s}},{\rm S}^{{\rm s}} \}$ where 
\begin{equation}
\label{ffss}
{\rm F}^{{\rm s}}= a_1+2a_2+3a_3+4a_4+2a_5+3a_6+2a_7+a_8= b_1+2b_2+3b_3+4b_4+2b_5+3b_6+2b_7+b_8,
\end{equation}
and the two ${\rm E}_7$ sub-lattices are spanned by $\{ a_1, a_2, \cdots a_7 \}$ and $\{ b_1, b_2, \cdots b_7 \}$, respectively. 
\par In the case where $({\rm X},i)$ is special, the standard fibration $\varphi_{{\rm X}}^{{\rm s}} $ has two singular fibers of types ${\rm II}^*$  and ${\rm III}^*$, respectively. An extra rational curve appears 
on the dual diagram. 
\begin{equation}
\label{diagg55}
\def\objectstyle{\scriptstyle}
\def\labelstyle{\scriptstyle}
\xymatrix @-0.9pc  {
\stackrel{a_1}{\bullet} \ar @{-} [r] 
& \stackrel{a_2}{\bullet} \ar @{-} [r]&
\stackrel{a_3}{\bullet} \ar @{-} [r] \ar @{-} [d] &
\stackrel{a_5}{\bullet} \ar @{-} [r]  &
\stackrel{a_6}{\bullet} \ar @{-} [r] &
\stackrel{a_7}{\bullet} \ar @{-} [r] &
\stackrel{a_8}{\bullet} \ar @{-} [r] &
\stackrel{a_9}{\bullet} \ar @{-} [r] &
\stackrel{S^{{\rm s}}}{\bullet} \ar @{-} [r] &
\stackrel{b_8}{\bullet} \ar @{-} [r] &
\stackrel{b_7}{\bullet} \ar @{-} [r] &
\stackrel{b_6}{\bullet} \ar @{-} [r] &
\stackrel{b_4}{\bullet} \ar @{-} [r] \ar @{-} [d] &
\stackrel{b_3}{\bullet} & \stackrel{b_2}{\bullet} \ar @{-} [l]
 & \stackrel{b_1}{\bullet} \ar @{-} [l] \\
 &   & \stackrel{a_4}{\bullet} & & & & & & &  &    & & \stackrel{b_5}{\bullet}  & &   \\
} 
\end{equation}
\noindent In both diagrams $(\ref{diagg66})$ and $(\ref{diagg55})$ one sees a singular fiber of $D$-type. This fact leads 
one to a second primitive lattice embedding of ${\rm H}$ into $ {\rm NS}({\rm X})$.  The image of this 
embedding is spanned by $ \{ {\rm F}^{{\rm a}}, {\rm S}^{{\rm a}}_1 \}$ with these classes given, if $({\rm X},i)$ is non-special situation, by:
\begin{equation}
\label{ffaaa}
{\rm S}^{{\rm a}}_1= a_2, \ \ \ {\rm F}^{{\rm a}}=a_3 + a_5 + 2 (a_4+a_6+a_7+a_8+S^{{\rm s}}+b_8+b_7+b_6+ b_4) + b_3 + b_5 \ .   
\end{equation}  
In the special case one rather has:
$$ {\rm S}^{{\rm a}}_1= a_1, \ \ \ {\rm F}^{{\rm a}}=a_2 + a_4 + 2 (a_3+a_5+a_6+a_7+a_8+a_9+S^{{\rm s}}+b_8+b_7+b_6+ b_4) + b_3 + b_5 \ .   $$
This new embedding determines an alternate jacobian elliptic fibration $\varphi^{{\rm a}}_{{\rm X}} \colon {\rm X} \rightarrow \mathbb{P}^1 $. This 
fibration has two disjoint sections ${\rm S}^{{\rm a}}_1$ and ${\rm S}^{{\rm a}}_2$ obtained as ${\rm S}^{{\rm a}}_1 = a_2$, ${\rm S}^{{\rm a}}_2=b_2$, in 
the non-special case, and as ${\rm S}^{{\rm a}}_1 = a_1$, ${\rm S}^{{\rm a}}_2=b_2$ in the special case.
\end{proof}
\noindent The alternate fibration $\varphi^{{\rm a}}_{{\rm X}} \colon {\rm X} \rightarrow \mathbb{P}^1 $ plays a central role in the 
next results. In addition to the singular fiber of type ${\rm I}_8^*$ (or ${\rm I}_{10}^*$ if the polarization is special), the fibration 
$\varphi^{{\rm a}}_{{\rm X}}$ carries additional singular fibers. Generally, the singular fiber 
types of $\varphi^{{\rm a}}_{{\rm X}}$ are ${\rm I}^*_{8}+2 \times {\rm I}_2+6 \times {\rm I}_1$ in the case of a 
non-special polarized pair $({\rm X},i)$, and ${\rm I}^*_{10}+{\rm I}_2+ 6 \times {\rm I}_1$ for a  special $({\rm X},i)$, 
respectively. In such a general situation, the dual diagrams $(\ref{diagg66})$ and $(\ref{diagg55})$ get augmented 
(with two or one rational curves, respectively) to nineteen-curve diagrams as follows. 
\begin{equation}
\label{diagg77}
\def\objectstyle{\scriptstyle}
\def\labelstyle{\scriptstyle}
\xymatrix @-0.9pc  {
& & \stackrel{a_1}{\bullet} \ar @{=}[rrrrrrrrrr]& &  & & &    & & & & & \stackrel{d}{\bullet} &  &  \\ 
 & &  & &  & & & &   &    & &  & & \\
& \stackrel{a_2}{\bullet} \ar @{-} [r] \ar @{-}[ddr] \ar @{-}[uur] &
\stackrel{a_3}{\bullet} \ar @{-} [r]  &
\stackrel{a_4}{\bullet} \ar @{-} [r]  \ar @{-} [d] &
\stackrel{a_6}{\bullet} \ar @{-} [r] &
\stackrel{a_7}{\bullet} \ar @{-} [r] &
\stackrel{a_8}{\bullet} \ar @{-} [r] &
\stackrel{S}{\bullet} \ar @{-} [r] &
\stackrel{b_8}{\bullet} \ar @{-} [r] &
\stackrel{b_7}{\bullet} \ar @{-} [r] &
\stackrel{b_6}{\bullet} \ar @{-} [r] &
\stackrel{b_4}{\bullet} \ar @{-} [r] \ar @{-} [d] &
\stackrel{b_3}{\bullet} & \stackrel{b_2}{\bullet} \ar @{-} [l] \ar @{-}[ddl] \ar @{-}[uul]
 & \\
 & &  & \stackrel{a_5}{\bullet} &  & & & &   &    & & \stackrel{b_5}{\bullet}  & & \\
 & & \stackrel{c}{\bullet} \ar @{=}[rrrrrrrrrr]& &  & & &    & & & & &   \stackrel{b_1}{\bullet} & & \\
} 
\end{equation}
\begin{equation}
\label{diagg88}
\def\objectstyle{\scriptstyle}
\def\labelstyle{\scriptstyle}
\xymatrix @-0.9pc  {
& \stackrel{a_1}{\bullet} \ar @{-} [r] \ar @{-}[dd]  
& \stackrel{a_2}{\bullet} \ar @{-} [r]&
\stackrel{a_3}{\bullet} \ar @{-} [r] \ar @{-} [d] &
\stackrel{a_5}{\bullet} \ar @{-} [r]  &
\stackrel{a_6}{\bullet} \ar @{-} [r] &
\stackrel{a_7}{\bullet} \ar @{-} [r] &
\stackrel{a_8}{\bullet} \ar @{-} [r] &
\stackrel{a_9}{\bullet} \ar @{-} [r] &
\stackrel{S^{{\rm s}}}{\bullet} \ar @{-} [r] &
\stackrel{b_8}{\bullet} \ar @{-} [r] &
\stackrel{b_7}{\bullet} \ar @{-} [r] &
\stackrel{b_6}{\bullet} \ar @{-} [r] &
\stackrel{b_4}{\bullet} \ar @{-} [r] \ar @{-} [d] &
\stackrel{b_3}{\bullet} \ar @{-} [r] & \stackrel{b_2}{\bullet} \ar @{-}[dd]
 & \\
 & & &  \stackrel{a_4}{\bullet} &  & & & & &  &       &    & & \stackrel{b_5}{\bullet}  & & \\
  & \stackrel{c}{\bullet} \ar @{=}[rrrrrrrrrrrrrr]& &  & &  & & & & &     & & & & &   \stackrel{b_1}{\bullet}  & \\
} 
\end{equation}
Note the similarity with 
diagrams $(\ref{diagg33})$ and $(\ref{diagg44})$
\begin{theorem}
\label{vgse7th}
The section $S^{{\rm a}}_2$, interpreted as an element of the Mordell-Weil group ${\rm MW}(\varphi^{{\rm a}}_{{\rm X}}, S^{{\rm a}}_1)$, has order two. Fiber-wise translations by $S^{{\rm a}}_2$ extend to a Van Geemen-Sarti involution $ \Phi_{{\rm X}} \colon {\rm X} \rightarrow {\rm X}$.
\end{theorem}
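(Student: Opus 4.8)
The plan is to verify the cohomological criterion of Proposition~\ref{critvgs}, exactly as was done for the surface ${\rm Z}$ in Theorem~\ref{vgsth}. The key observation is that diagrams $(\ref{diagg77})$ and $(\ref{diagg88})$ for the alternate fibration $\varphi^{{\rm a}}_{{\rm X}}$ are structurally identical to diagrams $(\ref{diagg33})$ and $(\ref{diagg44})$ for the fibration $\varphi_{{\rm W}}$ on ${\rm W}$. This is not a coincidence: the surface ${\rm W}$ obtained from ${\rm Z}$ by the Nikulin construction carries precisely the polarization $(\ref{observedpol})$, so the computation should transfer formally once one matches up the rational curves.

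First I would fix the embedding ${\rm H} \hookrightarrow {\rm NS}({\rm X})$ giving the alternate fibration, using the explicit classes ${\rm F}^{{\rm a}}$ and ${\rm S}^{{\rm a}}_1$ from $(\ref{ffaaa})$. This yields the orthogonal splitting ${\rm NS}({\rm X}) = \langle {\rm F}^{{\rm a}}, {\rm S}^{{\rm a}}_1 \rangle \oplus \mathcal{W}$ as in $(\ref{nssplit})$, and the root sublattice $\mathcal{W}_{{\rm root}}$ is spanned by the irreducible components of the singular fibers of $\varphi^{{\rm a}}_{{\rm X}}$ not meeting the zero section ${\rm S}^{{\rm a}}_1$. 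In the non-special case, reading off diagram $(\ref{diagg77})$, the ${\rm I}_8^*$ fiber contributes a ${\rm D}_{12}$-type root chain (all components except the one meeting ${\rm S}^{{\rm a}}_1 = a_2$), while each of the two ${\rm I}_2$ fibers and the six ${\rm I}_2$ fibers from the generic singular-fiber list contributes a single $A_1$ root; the section ${\rm S}^{{\rm a}}_2 = b_2$ plays the role that $\Delta_6$ played for ${\rm Z}$.

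Next I would compute the projection $({\rm S}^{{\rm a}}_2)^w = {\rm S}^{{\rm a}}_2 - {\rm S}^{{\rm a}}_1 - 2{\rm F}^{{\rm a}}$ into $\mathcal{W}$, which by the remark preceding Proposition~\ref{critvgs} has self-intersection $-4$, and express it explicitly as a $\mathbb{Q}$-linear combination of the curves appearing in $(\ref{diagg77})$. As in the proof of Theorem~\ref{vgsth}, I expect $({\rm S}^{{\rm a}}_2)^w$ to be an integral combination of the components of the ${\rm I}_8^*$ chain plus a half-integral combination $\tfrac{1}{2}(\cdots)$ of the roots coming from the $A_1$ fibers, so that the doubling clears all denominators and $2({\rm S}^{{\rm a}}_2)^w$ lands in $\mathcal{W}_{{\rm root}}$. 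The order-two claim then follows because the class represents a nonzero two-torsion element of $\mathcal{W}/\mathcal{W}_{{\rm root}} \simeq {\rm MW}(\varphi^{{\rm a}}_{{\rm X}}, {\rm S}^{{\rm a}}_1)$. The special case is handled identically using diagram $(\ref{diagg88})$ and the corresponding ${\rm I}_{10}^*$ chain.

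The main obstacle I anticipate is purely bookkeeping: writing $({\rm S}^{{\rm a}}_2)^w$ in the fiber-component basis requires knowing the precise coefficients expressing ${\rm S}^{{\rm a}}_2 = b_2$ relative to the ${\rm D}$-type chain, and one must be careful that the half-integral part uses each $A_1$ root with the correct parity so that $2({\rm S}^{{\rm a}}_2)^w$ is genuinely integral and supported on $\mathcal{W}_{{\rm root}}$ rather than merely rational. Because the diagram $(\ref{diagg77})$ is isomorphic to $(\ref{diagg33})$, I would simply transport the expression for $\Delta_6^w$ found in Theorem~\ref{vgsth} under the curve correspondence $a_i \leftrightarrow \widetilde{\Delta}_i$, etc., which makes the verification essentially automatic and avoids recomputing intersection numbers from scratch.
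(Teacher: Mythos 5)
Your overall strategy is exactly the paper's: verify the criterion of Proposition~\ref{critvgs} for the splitting ${\rm NS}({\rm X}) = \langle {\rm F}^{{\rm a}}, {\rm S}^{{\rm a}}_1 \rangle \oplus \mathcal{W}$ and show $2({\rm S}^{{\rm a}}_2)^w \in \mathcal{W}_{{\rm root}}$. But the step where you actually produce the required expression has a concrete flaw. You propose to ``transport the expression for $\Delta_6^w$ found in Theorem~\ref{vgsth}'' using the isomorphism of diagram $(\ref{diagg77})$ with $(\ref{diagg33})$. That expression, however, was computed on ${\rm Z}$ with respect to the fibration $\varphi_{{\rm Z}}$, whose singular fibers are ${\rm I}_4^* + 6\times{\rm I}_2 + 2\times{\rm I}_1$ (diagram $(\ref{diagg11})$, with root lattice ${\rm D}_8 \oplus {\rm A}_1^{\oplus 6}$ and eight classes $\Upsilon_i$ entering with coefficient $\tfrac12$). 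Diagram $(\ref{diagg77})$ is instead modeled on the \emph{quotient} surface ${\rm W}$ (diagram $(\ref{diagg33})$), whose fibration has fibers ${\rm I}_8^* + 2\times{\rm I}_2 + 6\times{\rm I}_1$ and root lattice ${\rm D}_{12}\oplus{\rm A}_1\oplus{\rm A}_1$ --- and no analogue of the $\Delta_6^w$ formula is ever computed on ${\rm W}$. Your phrase ``the six ${\rm I}_2$ fibers from the generic singular-fiber list'' is a symptom of this conflation: the alternate fibration on ${\rm X}$ has six ${\rm I}_1$ fibers, which contribute no roots at all, and only two ${\rm I}_2$'s. So the transport produces the wrong ansatz, and the computational heart of the proof is left undone. (Your expected shape of $({\rm S}^{{\rm a}}_2)^w$ --- integral on the ${\rm D}$-chain, half-integral only on the ${\rm A}_1$ roots --- is also not what happens: half-integral coefficients occur on several ${\rm D}_{12}$ components as well, just as $\Upsilon_7,\Upsilon_8$ carried coefficient $\tfrac12$ in Theorem~\ref{vgsth}. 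This does not affect the criterion, but it shows the pattern cannot simply be copied.)

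The computation therefore has to be carried out afresh for the ${\rm I}_8^*+2\times{\rm I}_2$ configuration. The paper does this without ever writing $({\rm S}^{{\rm a}}_2)^w$ in the root basis: it exploits the presence of the \emph{standard} fibration, using that ${\rm F}^{{\rm s}}$ can be written as the full ${\rm III}^*$ fiber on either side (equation $(\ref{ffss})$) to produce the relations $(\ref{ll1})$--$(\ref{ll2})$, and that ${\rm F}^{{\rm a}} - a_3$ and ${\rm F}^{{\rm a}} - a_1$ are manifestly supported on fiber components missing ${\rm S}^{{\rm a}}_1 = a_2$; adding these gives $2b_2 - 2a_2 - 4{\rm F}^{{\rm a}} \in \mathcal{W}_{{\rm root}}$ directly. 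If you want to complete your version, replace the transport step by this kind of explicit manipulation (or by an honest expansion of $({\rm S}^{{\rm a}}_2)^w$ over ${\rm D}_{12}\oplus{\rm A}_1\oplus{\rm A}_1$ with the correct half-integral coefficients), and treat the special case separately with the ${\rm I}_{10}^*$ chain and the single ${\rm I}_2$ fiber $\{c, b_1\}$.
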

\begin{proof}
One needs to verify the criterion of Proposition $\ref{critvgs}$. We shall do this check assuming a non-special polarization $({\rm X},i)$. 
Similar arguments hold for the special polarizations.
\par Assume that $({\rm X}, i)$ is a non-special polarization and take the orthogonal decomposition 
$${\rm NS}({\rm X}) = \langle {\rm F}^{{\rm a}}, a_2 \rangle \oplus \mathcal{W} .$$ 
This provides the negative-definite 
lattice $\mathcal{W}$ which has rank ${\rm p}_{{\rm X}}-2$. The root sublattice $\mathcal{W}_{{\rm root}}$ contains as orthogonal factors:
\begin{equation}
\langle a_4, \ a_5, \ a_6, \ a_7, \ a_8 , \ S^{{\rm s}}, \ b_8, \ b_7, \ b_6, \ b_5, \ b_4,  \ b_3 \rangle 
\ \oplus \ \langle b_1, \ \cdots  \rangle 
\ \oplus \langle d_1, \ d_2 \cdots  \rangle .  
\end{equation}  
The second factor above is spanned by the classes of the irreducible components of the singular fiber in $\varphi^{{\rm a}}_{{\rm X}}$ containing $b_1$ and 
not meeting $S^{{\rm a}}_1$. The third factor $ \langle d_1, \ d_2 \cdots  \rangle $ is spanned by the irreducible components of the 
singular fiber containing $a_1$ and not meeting $S^{{\rm a}}_1$. For a generic non-special $({\rm X},i)$, one has 
$\langle b_1, \ \cdots  \rangle = \langle b_1  \rangle $ and $\langle d_1, \ d_2 \cdots  \rangle = \langle d \rangle $ where $d$ is the rational curve of 
diagram $(\ref{diagg77})$.
\par One needs to check that $2b_2-2a_2-4{\rm F}^{{\rm a}} \in \mathcal{W}_{{\rm root}} $. By taking into account $(\ref{ffss})$ one obtains:
\begin{equation}
\label{ll1}
2b_2 - {\rm F}^{{\rm s}} \ = \ - \left ( b_8 + 2b_7+3b_6+4b_4+2b_5+3b_3 +b_1  \right ) \ \in \mathcal{W}_{{\rm root}}
\end{equation}
\begin{equation}
\label{ll2}
{\rm F}^{{\rm s}}- \left ( a_1 +2a_2+3a_3 \right )   \ = \  \left ( 4a_4+2a_5+3a_6 +2a_7 + a_8  \right ) \ \in \mathcal{W}_{{\rm root}}.
\end{equation}
Taking the sum of $(\ref{ll1})$ and $(\ref{ll1})$, we have:
\begin{equation}
\label{ll3}
\left ( 2b_2-2a_2 \right ) - \left ( a_1 +3a_3 \right )   \ \in \mathcal{W}_{{\rm root}}.
\end{equation}
Note also that, by comparing with $(\ref{ffaaa})$, we also have: 
\begin{equation}
\label{ll4}
{\rm F}^{{\rm a}}- a_3   \ = \  a_5 + 2 \left ( a_4+a_6+a_7+a_8+S^{{\rm s}}+b_8+b_7+b_6+ b_4 \right ) + b_3 + b_5 \ \in \mathcal{W}_{{\rm root}}
\end{equation}
\begin{equation}
\label{ll5}
{\rm F}^{{\rm a}}- a_1   \  \in \langle d_1, \ d_2 \cdots  \rangle \ \subset  \   \mathcal{W}_{{\rm root}}.
\end{equation}
One obtains therefore that:
\begin{equation}
\label{ll6}
4  {\rm F}^{{\rm a}} - \left ( a_1 +3a_3 \right )   \ \in \mathcal{W}_{{\rm root}}.
\end{equation}
By subtracting $(\ref{ll6})$ from $(\ref{ll3})$, we obtain $2b_2-2a_2-4{\rm F}^{{\rm a}} \in \mathcal{W}_{{\rm root}} $. 
\end{proof}
\subsection{Properties of the Involution $\Phi_{{\rm X}}$}
\label{k3invpropr}
We assume that the polarized K3 surface $({\rm X},i)$ is such that, in both cases (non-special or special), 
the alternate elliptic fibration $\varphi^{{\rm a}}_{{\rm X}}$ has singular fiber types ${\rm I}^*_8+2 \times {\rm I}_2+6 \times {\rm I}_1$ or 
${\rm I}^*_{10}+ {\rm I}_2+6 \times {\rm I}_1$, respectively. We shall therefore make use of the diagrams of rational curves 
$(\ref{diagg77})$ and $(\ref{diagg88})$ 
\par The fixed locus $  \{ n_1, n_2, n_3 , \cdots n_8 \}$ of the Van Geemen-Sarti involution $\Phi_{{\rm X}}$ appears as follows. 
The first six points $ n_1, n_2, n_3 , \cdots n_6 $ are 
the singularities of the six ${\rm I}_1$ fibers of the alternate fibration. The remaining $n_7$, $n_8$ are distinct points lying on 
the rational curve $S^{{\rm s}}$, if the polarization $({\rm X},i)$ is non-special, and on the curve $a_9$ if $({\rm X},i)$ is special. 
\par Two additional effective reduced divisors on ${\rm X}$ play a role in the construction. The first divisor, denoted $ {\rm Q}$ is obtained from compactifying the 
set of order-two points in the smooth fibers of $\varphi^{{\rm a}}_{{\rm X}}$ that do not lie on ${\rm S}^{{\rm a}}_1$ or ${\rm S}^{{\rm a}}_2$. 
The divisor ${\rm Q}$ is a bi-section of the alternate fibration. It contains $ n_1, n_2, n_3, n_4, n_5, n_6$ but not $n_1, n_2$. Generally, ${\rm Q}$ is 
a smooth genus-two curve and the restriction of the alternate fibration provides a double cover ${\rm Q} \rightarrow \mathbb{P}^1$ ramified at 
the six points $ n_1, n_2, n_3, n_4, n_5, n_6$. The second divisor, denoted ${\rm K}$ is obtained from compactifying the points $x$ 
in the smooth fibers of $\varphi^{{\rm a}}_{{\rm X}}$ that, with respect to the elliptic group law with neutral element at ${\rm S}^{{\rm a}}_1$, 
satisfy $2x= {\rm S}^{{\rm a}}_2$. The divisor ${\rm K}$ is a four-section of the alternate fibration. It contains all eight points of 
the fixed locus of $\Phi_{{\rm X}}$. Generally, ${\rm K}$ is a smooth curve of genus three in the non-special case and of genus two in 
the special case, respectively. The restriction of the alternate fibration gives a four-sheeted cover ${\rm K} \rightarrow \mathbb{P}^1$ 
branched at the base-points corresponding to singular fibers in the alternate fibration (nine points in the 
non-special case and eight points in the special case, respectively). Both divisors ${\rm Q}$ and ${\rm K}$ are mapped to themselves 
by the involution $\Phi_{{\rm X}}$. Their intersections with the curves of the big singular fiber of the alternate fibration are presented in the 
diagrams below. The first diagram corresponds to the case of a non-special $({\rm X},i)$. The second is associated with the special case.    
\begin{equation}
\label{diagg1010}
\def\objectstyle{\scriptstyle}
\def\labelstyle{\scriptstyle}
\xymatrix @-0.9pc  {
& \stackrel{a_2}{\bullet} \ar @{-} [r] 
&
\stackrel{a_3}{\bullet} \ar @{-} [r]  &
\stackrel{a_4}{\bullet} \ar @{-} [r]  \ar @{-} [d] &
\stackrel{a_6}{\bullet} \ar @{-} [r] &
\stackrel{a_7}{\bullet} \ar @{-} [r] &
\stackrel{a_8}{\bullet} \ar @{-} [r] &
\stackrel{S}{\bullet} \ar @{-} [r] \ar @{=}[d] &
\stackrel{b_8}{\bullet} \ar @{-} [r] &
\stackrel{b_7}{\bullet} \ar @{-} [r] &
\stackrel{b_6}{\bullet} \ar @{-} [r] &
\stackrel{b_4}{\bullet} \ar @{-} [r] \ar @{-} [d] &
\stackrel{b_3}{\bullet} & \stackrel{b_2}{\bullet} \ar @{-} [l] 
 & \\
 & &  & \stackrel{a_5}{\bullet} \ar @{-}[drrrr] &  & & & \stackrel{{\rm K}}{\bullet} &   &    & & \stackrel{b_5}{\bullet} \ar @{-}[dllll] & & \\
  & &  &  &  & &  & \stackrel{{\rm Q}}{\bullet} &   &    & &   & & \\
} 
\end{equation} 
\begin{equation}
\label{diagg1111}
\def\objectstyle{\scriptstyle}
\def\labelstyle{\scriptstyle}
\xymatrix @-0.9pc  {
& \stackrel{a_1}{\bullet} \ar @{-} [r] 
& \stackrel{a_2}{\bullet} \ar @{-} [r]&
\stackrel{a_3}{\bullet} \ar @{-} [r] \ar @{-} [d] &
\stackrel{a_5}{\bullet} \ar @{-} [r]  &
\stackrel{a_6}{\bullet} \ar @{-} [r] &
\stackrel{a_7}{\bullet} \ar @{-} [r] &
\stackrel{a_8}{\bullet} \ar @{-} [r] &
\stackrel{a_9}{\bullet} \ar @{-} [r] \ar @{=}[d] &
\stackrel{S^{{\rm s}}}{\bullet} \ar @{-} [r] &
\stackrel{b_8}{\bullet} \ar @{-} [r] &
\stackrel{b_7}{\bullet} \ar @{-} [r] &
\stackrel{b_6}{\bullet} \ar @{-} [r] &
\stackrel{b_4}{\bullet} \ar @{-} [r] \ar @{-} [d] &
\stackrel{b_3}{\bullet} \ar @{-} [r] & \stackrel{b_2}{\bullet} 
 & \\
 & & &  \stackrel{a_4}{\bullet} \ar @{-}[drrrrr] &  & & & & \stackrel{K}{\bullet} &  &       &    & & \stackrel{b_5}{\bullet} \ar @{-}[dlllll] & & \\
  & & &   &  & & & & \stackrel{{\rm Q}}{\bullet} &  &       &    & &  & & \\
} 
\end{equation}
The intersections of ${\rm Q}$ and ${\rm K}$ with the ${\rm I}_2$ fiber curves are as follows. In the non-special case, one has. 
$$ {\rm Q} \cdot a_1 = {\rm Q} \cdot d = {\rm Q} \cdot c = {\rm Q} \cdot a_1 = 1, \ \ \ 
{\rm K} \cdot a_1 = {\rm K} \cdot d = {\rm K} \cdot c = {\rm K} \cdot a_1 = 2. $$
In the special case:  
$$ {\rm Q} \cdot c = {\rm Q} \cdot b_1 = 1, \ \ \ 
{\rm K} \cdot c = {\rm K} \cdot b_1 = 2. $$
\begin{dfn}
\label{genk36i}
A K3 surface $({\rm X},i)$ polarized by the lattice ${\rm H} \oplus {\rm E}_7 \oplus {\rm E}_7$ is called {\bf generic} if the 
following two conditions are satisfied:
\begin{itemize}
\item [(a)] The alternate fibration $\varphi_{{\rm X}}^{{\rm a}} \colon {\rm X} \rightarrow \mathbb{P}^1 $ has singular fiber types 
${\rm I}^*_8+2 \times {\rm I}_2+6 \times {\rm I}_1$ or ${\rm I}^*_{10}+ {\rm I}_2+6 \times {\rm I}_1$ depending on whether $({\rm X},i)$ 
is non-special, or special, respectively.
\item [(b)] The effective divisors ${\rm Q}$ and ${\rm K}$ introduced above are both irreducible. 
\end{itemize} 
\end{dfn}
\noindent We are now in position to prove the following result:
\begin{theorem}
Let $({\rm X},i)$ be a generic K3 surface polarized by the lattice ${\rm H} \oplus {\rm E}_7 \oplus {\rm E}_7$. Denote by ${\rm Y}$ the K3 surface 
obtained by the Nikulin construction associated to the Van Geemen-Sarti involution $\Phi_{{\rm X}}$. Then, the surface ${\rm Y}$ is isomorphic 
to the minimal resolution of a double cover of the projective plane $\mathbb{P}^2$ branched at a six-line configuration $\mathcal{L}$. 
No three of the six lines are concurrent. If the polarization $({\rm X},i)$ is non-special then the six-line configuration $\mathcal{L}$ is non-Kummer. For 
special polarizations $({\rm X},i)$, the configuration $\mathcal{L}$ is Kummer.
\end{theorem}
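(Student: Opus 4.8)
The plan is to realize ${\rm Y}$ as a double sextic by exhibiting on it a non-symplectic involution whose quotient reconstructs $\mathbb{P}^2$ together with the six branch lines. The key observation is that, being jacobian, the alternate fibration $\varphi^{{\rm a}}_{{\rm X}}$ carries the fiber-wise inversion $\iota_{{\rm X}} \colon {\rm X} \rightarrow {\rm X}$ with respect to the zero section ${\rm S}^{{\rm a}}_1$. This $\iota_{{\rm X}}$ is non-symplectic, since it negates the fiberwise one-form and hence the holomorphic two-form. Because ${\rm S}^{{\rm a}}_2$ is a two-torsion section, $\iota_{{\rm X}}$ commutes with the translation $\Phi_{{\rm X}}$: writing ${\rm T}$ for the corresponding two-torsion point, one has $-{\rm T}={\rm T}$ and hence $-(x+{\rm T}) = -x + {\rm T}$. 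The fixed locus of $\iota_{{\rm X}}$ is the compactified two-torsion of $\varphi^{{\rm a}}_{{\rm X}}$, namely the sections ${\rm S}^{{\rm a}}_1, {\rm S}^{{\rm a}}_2$, the bisection ${\rm Q}$, together with the components of the large fiber on which the inversion restricts trivially (see diagrams $(\ref{diagg1010})$ and $(\ref{diagg1111})$).

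Because $\iota_{{\rm X}}$ commutes with $\Phi_{{\rm X}}$, it descends through the Nikulin construction to a non-symplectic involution $\sigma_{{\rm Y}} \colon {\rm Y} \rightarrow {\rm Y}$, which is precisely the fiber-wise inversion of the induced fibration $\varphi_{{\rm Y}}$ about ${\rm S}'_1$. First I would determine $\varphi_{{\rm Y}}$ and its singular fibers by transporting the curves of diagrams $(\ref{diagg77})$ and $(\ref{diagg88})$ through the push-forward $({\rm p}_{\Phi_{{\rm X}}})_*$ and applying the standard behaviour of Kodaira fibers under a two-isogeny. Since ${\rm S}^{{\rm a}}_2$ meets the extremal component of the ${\rm I}^*_8$ (resp. ${\rm I}^*_{10}$) fiber opposite to ${\rm S}^{{\rm a}}_1$, that fiber halves to ${\rm I}^*_4$ (resp. ${\rm I}^*_5$); the six ${\rm I}_1$ fibers, on whose identity components ${\rm S}^{{\rm a}}_2$ lies, double to six ${\rm I}_2$ fibers; and the two ${\rm I}_2$ fibers carrying ${\rm S}^{{\rm a}}_2$ on their non-identity component halve to ${\rm I}_1$ fibers (a single such fiber when $({\rm X},i)$ is special). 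The output reproduces diagram $(\ref{diagg11})$ in the non-special case and $(\ref{diagg22})$ in the special case, with ${\rm S}'_1 = ({\rm p}_{\Phi_{{\rm X}}})_* {\rm S}^{{\rm a}}_1$, ${\rm S}'_2 = ({\rm p}_{\Phi_{{\rm X}}})_* {\rm Q}$ the two sections and ${\rm B}' = ({\rm p}_{\Phi_{{\rm X}}})_* {\rm K}$ the bisection of $\varphi_{{\rm Y}}$; a Riemann--Hurwitz count against the eight fixed points of $\Phi_{{\rm X}}$ confirms that ${\rm S}'_2$ and ${\rm B}'$ are smooth rational curves.

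Next I would compute the fixed locus of $\sigma_{{\rm Y}}$. The traces of the four two-torsion points give ${\rm S}'_1$, ${\rm S}'_2$ and the bisection ${\rm B}'$, while over the large fiber exactly the pointwise-fixed multiplicity-two components contribute, and the count yields six pairwise disjoint smooth rational curves $\Delta_1, \dots, \Delta_6$. The quotient map $\pi' \colon {\rm Y} \rightarrow {\rm Y}/\sigma_{{\rm Y}} =: {\rm R}'$ then presents ${\rm R}'$ as a smooth rational surface ruled over $\mathbb{P}^1$ by the images of the fibers of $\varphi_{{\rm Y}}$, each elliptic fiber mapping to a $\mathbb{P}^1$ doubly covered at its four two-torsion points, and the $\Delta_i$ map isomorphically onto the branch curves of $\pi'$.

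The final and, I expect, most delicate step is the reconstruction of $\mathbb{P}^2$, since it amounts to inverting the blow-up $\rho$ and the double cover $\pi$ of Section $\ref{pseudokummer}$. I would contract an explicit chain of $(-1)$-curves on ${\rm R}'$ so that the ruling by the images of the $\varphi_{{\rm Y}}$-fibers becomes the quintic pencil $\vert {\rm D} \vert$ of $(\ref{ruling1})$; under this pencil two of the $\Delta_i$ descend to sections (the lines ${\rm L}_5, {\rm L}_6$), one to a bisection (${\rm L}_4$), and the remaining three sit inside the special member (${\rm L}_1, {\rm L}_2, {\rm L}_3$). Computing intersection numbers I would check that the six images are genuine lines with no three concurrent, so that ${\rm Y}$ is the double sextic attached to a configuration $\mathcal{L}$. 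The dichotomy is read off the large fiber: a non-special $({\rm X},i)$ forces the ${\rm I}^*_4$ fiber and, by Proposition $\ref{thebigfiber}$ and the Pascal--Brianchon criterion, a non-Kummer $\mathcal{L}$, whereas a special $({\rm X},i)$ yields the ${\rm I}^*_5$ fiber and hence six lines tangent to a common conic, i.e. a Kummer $\mathcal{L}$. To conclude I would invoke the dual two-isogeny of Section $\ref{isogenydef}$ to verify that the construction of Section $\ref{pseudokummer}$ applied to ${\rm Y}$ returns $({\rm X},i)$, confirming the stated isomorphism.
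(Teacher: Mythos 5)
Your overall route coincides with the paper's: both arguments transport the curves of diagrams $(\ref{diagg77})$ and $(\ref{diagg88})$ through the Nikulin quotient to obtain the induced fibration on ${\rm Y}$ with singular fibers ${\rm I}_4^*+6\times{\rm I}_2+2\times{\rm I}_1$ (resp.\ ${\rm I}_5^*$ in the special case), both take the fiberwise inversion about the section $\widetilde{{\rm a}}_2$ as the non-symplectic involution (the paper defines it directly on ${\rm Y}$ rather than descending $\iota_{{\rm X}}$, a cosmetic difference), and both identify its fixed locus as the six disjoint rational curves $\widetilde{{\rm K}},\widetilde{{\rm S}},\widetilde{{\rm a}}_7,\widetilde{{\rm a}}_4,\widetilde{{\rm a}}_2,\widetilde{{\rm Q}}$ before passing to the rational ruled quotient ${\rm R}$.

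The gap is in the step you yourself flag as delicate, and the mechanism you propose for it would not work as stated. Contracting $(-1)$-curves lying in the fibers of the ruling $\varphi_{{\rm R}}$ can only take you to a minimal ruled surface ($\mathbb{F}_0$ here) or to an auxiliary copy of $\mathbb{P}^2$ in which the branch curves appear as a pencil of six concurrent lines together with two further lines --- not to the plane in which $\mathcal{L}$ lives. That plane is reached by contracting fifteen \emph{disjoint} $(-1)$-curves ${\rm E}_{ij}$, most of which are multisections of the ruling (the paper's ${\rm E}_{13}$ has class $12{\rm F}^{{\rm a}}+\cdots$, a $12$-section), so they cannot be found inside the fibers and the phrase ``contract a chain of $(-1)$-curves so that the ruling becomes $\vert {\rm D}\vert$'' does not describe a valid procedure. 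The substance of the paper's proof is precisely this: first blow down along the ruling to the auxiliary $\mathbb{P}^2$ in order to obtain an explicit basis $\hat{{\rm H}},\hat{{\rm E}}_1,\ldots,\hat{{\rm E}}_{15}$ of ${\rm H}^2({\rm R},\mathbb{Z})$; then write the fifteen candidate classes ${\rm E}_{ij}$ in that basis and use the genericity hypothesis (Definition $\ref{genk36i}$) to verify that each is effective, represented by a unique smooth rational curve, and that the fifteen are pairwise disjoint; only then contract them to reach the plane where $\hat{{\rm a}}_4,\hat{{\rm a}}_7,\hat{{\rm S}},\hat{{\rm K}},\hat{{\rm a}}_2,\hat{{\rm Q}}$ become six lines with no three concurrent. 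Your reading of the Kummer/non-Kummer dichotomy via Pascal--Brianchon agrees with the paper (the conic $\hat{{\rm U}}_7$ passes through $q_{34}$ exactly in the special case), but it too requires the explicit model. Without the construction and verification of the classes ${\rm E}_{ij}$, the theorem's central claims --- that ${\rm R}$ is the blow-up of $\mathbb{P}^2$ at the pairwise intersections of six lines, no three concurrent --- remain unproved.
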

\begin{proof}
We present a detailed proof for the case when $({\rm X},i)$ is a non-special generic polarization. The same set of ideas together with a slight 
modification of the arguments provide the proof in the generic special case. 
\par This proof uses the notation of diagrams $(\ref{diagg77})$ and 
$(\ref{diagg1010})$. Note that the Van Geemen-Sarti involution $\Phi_{{\rm X}}$ 
maps the three curves ${\rm S}$, ${\rm Q}$ and ${\rm K}$ to 
themselves and interchanges the following nine pairs of rational curves:
\begin{equation}
\label{pairs113}
({\rm a}_8, {\rm b}_8) , \ ({\rm a}_7, {\rm b}_7) , \ ({\rm a}_6, {\rm b}_6) , \ ({\rm a}_5, {\rm b}_5) , \ ({\rm a}_4, {\rm b}_4) , \ 
({\rm a}_3, {\rm b}_3) , \ ({\rm a}_2, {\rm b}_2) , \ ({\rm a}_1, {\rm d}) , \ ({\rm c}, {\rm b}_1).
\end{equation} 
Under the push-forward by the rational degree-two map $ {\rm X} \dashrightarrow {\rm Y}$ of the Nikulin construction, the three curves 
${\rm S}$, ${\rm Q}$ and ${\rm K}$, as well as the curves in the first seven pairs of $(\ref{pairs113})$ determine ten smooth rational curves. 
We shall denote these curves by:
\begin{equation}
\widetilde{{\rm S}}, \ \widetilde{{\rm Q}}, \ \widetilde{{\rm K}}, \ \widetilde{{\rm a}}_8, \ \widetilde{{\rm a}}_7, \ \widetilde{{\rm a}}_6, \ 
\widetilde{{\rm a}}_5, \ \widetilde{{\rm a}}_4, \ \widetilde{{\rm a}}_3, \ \widetilde{{\rm a}}_2. \ 
\end{equation}   
The last two pairs in $(\ref{pairs113})$ determine rational curves with a single ordinary node. These form two ${\rm I}_1$ fibers in the elliptic fibration 
$ \varphi^{{\rm a}}_{{\rm Y}} \colon {\rm Y} \rightarrow \mathbb{P}^1 $ induced from the alternate fibration on ${\rm X}$.   
\par Denote by ${\rm U}_i$, $1 \leq i \leq 8$, the rational curves on ${\rm Y}$ appearing as exceptional curves 
associated to the fixed points $p_i$. Let us also consider ${\rm V}_i$, $1 \leq i \leq 6$, as the resolutions of the ${\rm I}_1$ fibers 
(of the alternate fibration) with singularities at $p_i$. One obtains twenty-four smooth rational curves on ${\rm Y}$ whose intersection pattern 
is summarized by the following dual diagram.    
\begin{equation}
\label{diagg1212}
\def\objectstyle{\scriptstyle}
\def\labelstyle{\scriptstyle}
\xymatrix @-0.9pc
{
 & \stackrel{{\rm U}_{7}}{\bullet} \ar @{-} [dr] & & & & & & \stackrel{\widetilde{{\rm a}}_3}{\bullet} \ar @{-} [dl]
 & \stackrel{\widetilde{{\rm a}}_{2}}{\bullet} \ar @{-} [l] &  & &  \stackrel{{\rm V}_{i}}{\bullet} \ar @{=} [dd] \ar @{-} [r] & 
 \stackrel{\widetilde{{\rm a}}_{2}}{\bullet} & & & \\
\stackrel{\widetilde{{\rm K}}}{\bullet} \ar @{-} [dr] \ar @{-} [ur]& & \stackrel{\widetilde{{\rm S}}}{\bullet} \ar @{-} [r] \ar @{-} [dl] &
\stackrel{\widetilde{{\rm a}}_{8}}{\bullet} \ar @{-} [r] &
\stackrel{\widetilde{{\rm a}}_{7}}{\bullet} \ar @{-} [r] &
\stackrel{\widetilde{{\rm a}}_{6}}{\bullet} \ar @{-} [r] &
\stackrel{\widetilde{{\rm a}}_{4}}{\bullet} \ar @{-} [dr] & &  & & 
\stackrel{\widetilde{{\rm K}}}{\bullet} \ar @{-} [ur] \ar @{-} [dr]  & &   & 1 \leq i \leq  6 \\
&  \stackrel{{\rm U}_8}{\bullet} & & & &   & & \stackrel{\widetilde{{\rm a}}_{5}}{\bullet} & 
\stackrel{\widetilde{{\rm Q}}}{\bullet} \ar @{-} [l] & &  &  \stackrel{{\rm U}_{i}}{\bullet} \ar @{-} [r] & \stackrel{\widetilde{{\rm Q}}}{\bullet} &  & & \\
}
\end{equation}
The elliptic fibration $ \varphi^{{\rm a}}_{{\rm Y}} \colon {\rm Y} \rightarrow \mathbb{P}^1 $ has the singular fiber type 
$${\rm I}_4^*+6 \times {\rm I}_2 +2 \times {\rm I}_1 \ . $$ 
The two curves $\widetilde{{\rm a}}_2$, $\widetilde{{\rm Q}}$ form sections in $ \varphi^{{\rm a}}_{{\rm Y}}$ while $\widetilde{{\rm K}}$ is a bi-section. 
As explained in Section \ref{isogenydef}, fiber-wise translations by the section $\widetilde{{\rm Q}}$ determine the dual 
Van Geemen-Sarti involution $ \Phi_{{\rm Y}}$.
\par Let $\sigma \colon {\rm Y} \rightarrow {\rm Y}$ be the non-symplectic involution 
associated to $x \mapsto -x $ in the group law with origin at $\widetilde{{\rm a}}_2$ 
on the smooth fibers of $\varphi^{{\rm a}}_{{\rm Y}}$. By classical theory on elliptic curve deformations, the fixed locus of the 
involution $\sigma$ is given by the six disjoint rational curves:
\begin{equation}
\label{ttcurves1}
 \widetilde{{\rm K}}, \ \widetilde{{\rm S}}, \ \widetilde{{\rm a}}_{7}, \ \widetilde{{\rm a}}_{4}, \ \widetilde{{\rm a}}_{2}, \ \widetilde{{\rm Q}}. 
 \end{equation}
In addition, the rational curves
\begin{equation}
\label{ttcurves2}
\widetilde{{\rm a}}_{8} , \ \widetilde{{\rm a}}_{6}, \ \widetilde{{\rm a}}_{5}, \ \widetilde{{\rm a}}_{3}, \ 
{\rm U}_i \ {\rm with} \ 1 \leq i \leq 8, \ {\rm V}_i \ {\rm with} \ 1 \leq i \leq 6  
\end{equation}
are mapped onto themselves under $\sigma$. 
\par The quotient of the K3 surface ${\rm Y}$ by the involution $\sigma$ is a rational ruled surface ${\rm R}$ with a ruling 
\begin{equation}
\label{rulingonr}
\varphi_{{\rm R}} \colon {\rm R} \rightarrow \mathbb{P}^1 
\end{equation}
induced from the elliptic fibration $\varphi_{{\rm Y}}^{{\rm a}}$. We shall use the superscript $\ \hat{}\ $ to 
denote the rational curves on ${\rm R}$ obtained as push-forward under the quotient map of the curves in $(\ref{ttcurves1})$ and $(\ref{ttcurves2})$.    
A dual diagram similar to $(\ref{diagg1212})$ appears. 
\begin{equation}
\label{diagg1313}
\def\objectstyle{\scriptstyle}
\def\labelstyle{\scriptstyle}
\xymatrix @-0.9pc
{
 & \stackrel{\hat{{\rm U}}_{7}(-1)}{\bullet} \ar @{-} [dr] & & & & & & \stackrel{\hat{{\rm a}}_3(-1)}{\bullet} \ar @{-} [dl]
 & \stackrel{\hat{{\rm a}}_{2}(-4)}{\bullet} \ar @{-} [l] & \\
\stackrel{\hat{{\rm K}}(-4)}{\bullet} \ar @{-} [dr] \ar @{-} [ur]& & \stackrel{\hat{{\rm S}}(-4)}{\bullet} \ar @{-} [r] \ar @{-} [dl] &
\stackrel{\hat{{\rm a}}_{8}(-1)}{\bullet} \ar @{-} [r] &
\stackrel{\hat{{\rm a}}_{7}(-4)}{\bullet} \ar @{-} [r] &
\stackrel{\hat{{\rm a}}_{6}(-1)}{\bullet} \ar @{-} [r] &
\stackrel{\hat{{\rm a}}_{4}(-4)}{\bullet} \ar @{-} [dr] & & &  \\
&  \stackrel{\hat{{\rm U}}_8(-1)}{\bullet} & &    & & & & \stackrel{\hat{{\rm a}}_{5}(-1)}{\bullet} & 
\stackrel{\hat{{\rm Q}}(-4)}{\bullet} \ar @{-} [l] & \\
}
\end{equation}
$$
\def\objectstyle{\scriptstyle}
\def\labelstyle{\scriptstyle}
\xymatrix @-0.9pc
{
 &    \stackrel{\hat{{\rm V}}_{i}(-1)}{\bullet} \ar @{=} [dd] \ar @{-} [r] & 
 \stackrel{\hat{{\rm a}}_{2}(-4)}{\bullet} & & & \\ 
 \stackrel{\hat{{\rm K}}(-4)}{\bullet} \ar @{-} [ur] \ar @{-} [dr]  & &   & 1 \leq i \leq  6 \\
 &       \stackrel{\hat{{\rm U}}_{i}(-1)}{\bullet} \ar @{-} [r] & \stackrel{\hat{{\rm Q}}(-4)}{\bullet} &  & & \\
 }
$$
The self-intersection numbers are included.
\par The ruling $(\ref{rulingonr})$, as well as the rational curves of $(\ref{diagg1313})$, will be used to prove that the 
rational surface ${\rm R}$ is isomorphic to the blow-up of $\mathbb{P}^2$ at a configuration of fifteen distinct points corresponding to the 
intersection of six distinct lines. The considerations of Section $\ref{pseudokummer}$ bring some insight into the construction, allowing 
us to write explicitly the cohomology classes associated to the fifteen would-be exceptional curves.  
\begin{align*}
{\rm E}_{12} \ =& \  \hat{{\rm a}}_{2} \\ 
{\rm E}_{13} \  =& \  12 {\rm F}^{{\rm a}} + 3 \hat{{\rm a}}_{2} - 3\hat{{\rm a}}_{4} -3\hat{{\rm a}}_{5}
-8\hat{{\rm a}}_{6}-5\hat{{\rm a}}_{7}-12\hat{{\rm a}}_{8}-7\hat{{\rm S}}
-\hat{{\rm U}}_{2} -\hat{{\rm U}}_{3} -3\hat{{\rm U}}_{4} 
-2\hat{{\rm U}}_{5} -2\hat{{\rm U}}_{6}-7\hat{{\rm U}}_{7}-8\hat{{\rm U}}_{8} \\
{\rm E}_{14}  \ =& \  
8 {\rm F}^{{\rm a}} + 2 \hat{{\rm a}}_{2} - 2\hat{{\rm a}}_{4} -2\hat{{\rm a}}_{5}
-5\hat{{\rm a}}_{6}-3\hat{{\rm a}}_{7}-7\hat{{\rm a}}_{8}-4\hat{{\rm S}}
 -\hat{{\rm U}}_{3} -2\hat{{\rm U}}_{4} 
-\hat{{\rm U}}_{5} -2\hat{{\rm U}}_{6}-4\hat{{\rm U}}_{7}-5\hat{{\rm U}}_{8} \\
{\rm E}_{15}  \  =& \   
 {\rm F}^{{\rm a}}  - \hat{{\rm a}}_{4} -\hat{{\rm a}}_{5}
-2\hat{{\rm a}}_{6}-\hat{{\rm a}}_{7}-2\hat{{\rm a}}_{8}-\hat{{\rm S}}
-\hat{{\rm U}}_{7}-\hat{{\rm U}}_{8} \\
{\rm E}_{16}   \ =& \  \hat{{\rm a}}_{5} \\ 
{\rm E}_{23}  \ =& \  \hat{{\rm a}}_{8} \\
{\rm E}_{24}   \ =& \    
4 {\rm F}^{{\rm a}} +  \hat{{\rm a}}_{2} - \hat{{\rm a}}_{4} -\hat{{\rm a}}_{5}
-3\hat{{\rm a}}_{6}-2\hat{{\rm a}}_{7}-4\hat{{\rm a}}_{8}-2\hat{{\rm S}}
-\hat{{\rm U}}_{4} 
-\hat{{\rm U}}_{5} -\hat{{\rm U}}_{6}-2\hat{{\rm U}}_{7}-2\hat{{\rm U}}_{8} \\
{\rm E}_{25}  \  =& \  
9{\rm F}^{{\rm a}} + 2 \hat{{\rm a}}_{2} - 2\hat{{\rm a}}_{4} -2\hat{{\rm a}}_{5}
-6\hat{{\rm a}}_{6}-4\hat{{\rm a}}_{7}-9\hat{{\rm a}}_{8}-5\hat{{\rm S}}
-\hat{{\rm U}}_{2} -\hat{{\rm U}}_{3} -2\hat{{\rm U}}_{4} 
-\hat{{\rm U}}_{5} -2\hat{{\rm U}}_{6}-5\hat{{\rm U}}_{7}-6\hat{{\rm U}}_{8} \\
{\rm E}_{26}  \  =& \  
8 {\rm F}^{{\rm a}} + 2 \hat{{\rm a}}_{2} - 2\hat{{\rm a}}_{4} -2\hat{{\rm a}}_{5}
-6\hat{{\rm a}}_{6}-4\hat{{\rm a}}_{7}-9\hat{{\rm a}}_{8}-5\hat{{\rm S}}
-\hat{{\rm U}}_{3} -2\hat{{\rm U}}_{4} 
-\hat{{\rm U}}_{5} -\hat{{\rm U}}_{6}-5\hat{{\rm U}}_{7}-6\hat{{\rm U}}_{8} \\
{\rm E}_{34}  \  =& \  \hat{{\rm U}}_{7} \\
{\rm E}_{35} \ =& \    
  5{\rm F}^{{\rm a}} +  \hat{{\rm a}}_{2} - \hat{{\rm a}}_{4} -\hat{{\rm a}}_{5}
-3\hat{{\rm a}}_{6}-2\hat{{\rm a}}_{7}-5\hat{{\rm a}}_{8}-3\hat{{\rm S}}
-\hat{{\rm U}}_{3} -\hat{{\rm U}}_{4} 
-\hat{{\rm U}}_{5} -\hat{{\rm U}}_{6}-3\hat{{\rm U}}_{7}-3\hat{{\rm U}}_{8} \\
{\rm E}_{36} \  =& \    
  4{\rm F}^{{\rm a}} +  \hat{{\rm a}}_{2} - \hat{{\rm a}}_{4} -\hat{{\rm a}}_{5}
-3\hat{{\rm a}}_{6}-2\hat{{\rm a}}_{7}-5\hat{{\rm a}}_{8}-3\hat{{\rm S}}
-\hat{{\rm U}}_{4} 
 -\hat{{\rm U}}_{6}-3\hat{{\rm U}}_{7}-3\hat{{\rm U}}_{8} \\
{\rm E}_{45} \  =& \   {\rm F}^{{\rm a}}-\hat{{\rm U}}_{4} = \hat{{\rm V}}_{4} \\
{\rm E}_{46} \  =& \   \hat{{\rm U}}_{1} \\
{\rm E}_{56} \  =& \    
   5 {\rm F}^{{\rm a}} +  \hat{{\rm a}}_{2} - \hat{{\rm a}}_{4} -\hat{{\rm a}}_{5}
-3\hat{{\rm a}}_{6}-2\hat{{\rm a}}_{7}-5\hat{{\rm a}}_{8}-3\hat{{\rm S}}
-\hat{{\rm U}}_{4} 
-\hat{{\rm U}}_{5} -\hat{{\rm U}}_{6}-3\hat{{\rm U}}_{7}-4\hat{{\rm U}}_{8}
\end{align*}
We shall show that the classes 
\begin{equation}
\label{classesinquestion}
{\rm E}_{13}, \ {\rm E}_{14}, \ {\rm E}_{15} , \ {\rm E}_{24}, \ {\rm E}_{25}, \ {\rm E}_{26},  \ {\rm E}_{35}, \   {\rm E}_{36} , \ {\rm E}_{56}
\end{equation} 
are effective and their associated linear system consists of a single smooth rational curve. In order to accomplish this goal, 
we start a blow-down process 
\begin{equation}
\label{blowdownproc}
 {\rm R}={\rm R}_{15} \ \rightarrow \ {\rm R}_{14} \ \rightarrow  \ {\rm R}_{13} \ \rightarrow \ \cdots \ \rightarrow {\rm R}_{3} \ \rightarrow \ 
 {\rm R}_{2} \ \rightarrow  \ \widetilde{{\rm R}}_{1} 
\end{equation} 
by collapsing a sequence of exceptional rational curves in the fibers of the ruling $\varphi_{{\rm R}}$. The sequence of 
fourteen exceptional curves is as follows:
$$ \hat{{\rm E}}_{15}=\hat{{\rm U}}_{1}, \ \hat{{\rm E}}_{14}=\hat{{\rm U}}_{2}, \ \hat{{\rm E}}_{13}=\hat{{\rm U}}_{3}, \ 
\hat{{\rm E}}_{12}=\hat{{\rm V}}_{4}, \ 
\hat{{\rm E}}_{11}=\hat{{\rm V}}_{5}, \ \hat{{\rm E}}_{10}=\hat{{\rm V}}_{6}, \    
\hat{{\rm E}}_{9}=\hat{{\rm U}}_{7}, \  \hat{{\rm E}}_{8}=\hat{{\rm U}}_8,  $$
$$  \hat{{\rm E}}_{7}=\hat{{\rm a}}_{8}, \ \hat{{\rm E}}_{6}=\hat{{\rm a}}_{6}, \ \hat{{\rm E}}_{5}=\hat{{\rm S}} , \ 
\hat{{\rm E}}_{4}=\hat{{\rm a}}_{7} , \ 
\hat{{\rm E}}_{3}=\hat{{\rm a}}_{5}, \ \hat{\hat{{\rm E}}}_{2}=\hat{{\rm a}}_{3}.  $$
By a slight abuse, we keep the notation for the various rational curves involved, as they get pushed-forward under blow-downs. The resulting surface 
$\widetilde{{\rm R}}_{1} $ is smooth, rational and minimally ruled. Hence, by standard results on ruled surfaces (see, for instance, Chapter III of 
\cite{beauville}), the surface $\widetilde{{\rm R}}_{1} $ is isomorphic to one of 
the Hirzebruch surfaces $ \mathbb{F}_n$, $ n \geq 0$. The cohomology group ${\rm H}^2(\widetilde{{\rm R}}_{1}, \mathbb{Z})$ has 
rank two and is spanned by the classes of two rulings, one induced from $ \varphi_{{\rm R}}$ and having $\hat{{\rm a}}_{2}$ and $\hat{{\rm Q}}$ as fibers, 
and a second ruling with $\hat{{\rm a}}_{4}$ as fiber.   
$$
\def\objectstyle{\scriptstyle}
\def\labelstyle{\scriptstyle}
\xymatrix @-0.9pc
{
 &    \stackrel{\hat{{\rm a}}_{2}(0)}{\bullet} 
 & 
 & & & \\ 
 \stackrel{\hat{{\rm a}}_{4}(0)}{\bullet} \ar @{-} [ur] \ar @{-} [dr]  & &   & 
 \\
 &       \stackrel{\hat{{\rm Q}}(0)}{\bullet} 
 &  & & \\
 }
$$
It follows then that 
$\widetilde{{\rm R}}_{1} $ is isomorphic to $\mathbb{F}_0 =  \mathbb{P}^1 \times \mathbb{P}^1$. Moreover, if we remove the 
last blow-down in $ (\ref{blowdownproc})$ and instead we collapse the exceptional curves 
$$ \hat{{\rm E}}_{2} = \hat{{\rm a}}_2 \ {\rm and} \  \hat{{\rm E}}_{1}=\hat{{\rm a}}_{4} $$
$$ {\rm R}_{2} \ \rightarrow \ {\rm R}_{1} \ \rightarrow  \ {\rm R}_{0}, $$
the resulting surface $ {\rm R}_0 $ is a copy of the projective plane $\mathbb{P}^2$.
\par The blow-down construction determines the following configuration in $ {\rm R}_0 $. First, there are $x_1$, $x_2$ - the two 
distinct points of $ {\rm R}_0 $ where the last two exceptional curves $ \hat{{\rm a}}_4 $ 
and $\hat{{\rm a}}_{2}$ collapse. The push-forward of $\hat{{\rm a}}_2$ is the line $l$ joining $x_1$ and $x_2$. One has seven other 
distinct lines 
$$ l', \ l_1, \ l_2, \ l_3, \ l_4, \ l_5, \ l_6 $$ 
obtained as push-forward of 
$$ \widetilde{{\rm Q}}, \ \hat{{\rm V}}_{1}, \ \hat{{\rm V}}_{2}, \ \hat{{\rm V}}_{3}, \ 
\hat{{\rm U}}_{4}, \ \hat{{\rm U}}_{5}, \ \hat{{\rm U}}_{6}. $$
The line $l'$ passes through $x_1$ but not $x_2$. The six lines $ l_1, l_2, \cdots  l_6 $ meet at $x_2$ but do not pass through $x_1$. Denote by 
$ y_i$ with $ 1 \leq i \leq 6$, the six points of intersection between the lines $l'$ and $l_i$, respectively. Going backwards through the blow-down 
process $(\ref{blowdownproc})$, one recovers the rational surface ${\rm R}$ as the blow-up of the projective plane ${\rm R}_0$ at a sequence 
of fifteen points:
$$ p_1, \ p_2 , \ p_3 , \ \cdots \ p_{15} $$   
where $p_1=x_1$, $p_2=x_2$, the points $ p_3, p_4, \cdots p_9 $ are infinitely near $x_1$ with $p_3$ representing the tangent direction of $l'$, 
the points $p_{10}, p_{11}, p_{12}$ are infinitely near $x_2$ and represent the tangent directions of $l_6, l_5, l_4$, and 
$ p_{13}=y_3$, $p_{14}=y_2$, $p_{15}=y_1$. 
\par Let $ \hat{{\rm H}}$ be the class of a hyperplane section in  ${\rm R}_0 $ and 
denote by $\hat{{\rm E}}_{1}, \ \hat{{\rm E}}_{2}, \ \cdots \ \hat{{\rm E}}_{15} $ the strict transforms of the fifteen exceptional curves 
associated to the blow-up $ {\rm R} \rightarrow {\rm R}_0$. The sixteen classes 
$ \hat{{\rm H}}, \hat{{\rm E}}_{1}, \ \hat{{\rm E}}_{2}, \ \cdots \ \hat{{\rm E}}_{15} $ form a basis over the integers for ${\rm H}^2({\rm R},\mathbb{Z}) $ 
and, with respect to this basis, the classes of $(\ref{classesinquestion})$ are as follows:  
\begin{align*}
{\rm E}_{13} \ =& \ 5\hat{{\rm H}}- 3\hat{\rm E}_1 - 2 \left ( \hat{\rm E}_2+ \hat{\rm E}_4 + \hat{\rm E}_5 \right ) - 
\left ( \hat{\rm E}_8 + \hat{\rm E}_{10} + \hat{\rm E}_{11} + \hat{\rm E}_{13} + \hat{\rm E}_{14} \right )  \\
{\rm E}_{14} \ =& \ 3\hat{{\rm H}}- 2\hat{\rm E}_1 - \left ( \hat{\rm E}_2+ \hat{\rm E}_4 + \hat{\rm E}_5 + \hat{\rm E}_8 + 
\hat{\rm E}_{11} + \hat{\rm E}_{13}  \right )  \\
{\rm E}_{15} \ =& \ \hat{{\rm H}}- \hat{\rm E}_1 - \hat{\rm E}_2   \\
{\rm E}_{24} \ =& \ \hat{{\rm H}}- \hat{\rm E}_1 - \hat{\rm E}_4   \\
{\rm E}_{25} \ =& \ 4\hat{{\rm H}}- 2 \left ( \hat{\rm E}_1 + \hat{\rm E}_2+ \hat{\rm E}_4  \right ) - 
\left ( \hat{\rm E}_5 + \hat{\rm E}_8 + \hat{\rm E}_{11} + \hat{\rm E}_{13} + \hat{\rm E}_{14} \right )  \\
{\rm E}_{26} \ =& \ 4\hat{{\rm H}}- 2 \left ( \hat{\rm E}_1 + \hat{\rm E}_2+ \hat{\rm E}_4  \right ) - 
\left ( \hat{\rm E}_5 + \hat{\rm E}_8 + \hat{\rm E}_{10} + \hat{\rm E}_{11} + \hat{\rm E}_{13} \right )  \\
{\rm E}_{35} \ =& \ 2\hat{{\rm H}}- \left ( \hat{\rm E}_1+ \hat{\rm E}_2 + \hat{\rm E}_4 + \hat{\rm E}_5 + \hat{\rm E}_{13}  \right )  \\
{\rm E}_{36} \ =& \ 2\hat{{\rm H}}- \left ( \hat{\rm E}_1+ \hat{\rm E}_2 + \hat{\rm E}_4 + \hat{\rm E}_5 + \hat{\rm E}_{11}  \right )  \\
{\rm E}_{56} \ =& \ 2\hat{{\rm H}}- \left ( \hat{\rm E}_1+ \hat{\rm E}_2 + \hat{\rm E}_4 + \hat{\rm E}_5 + \hat{\rm E}_{8}  \right ).  
\end{align*}
Moreover, the class of the fiber of the ruling $\varphi_{{\rm R}}$ is 
$$ {\rm F}^{{\rm a}} \ = \ \hat{{\rm H}}- \hat{\rm E}_2.   $$
One verifies then that the nine points $p_1, p_2, p_4, p_5, p_8, p_{10}, p_{11}, p_{13}, p_{14} $ are in a general enough position that
all above classes are effective and each is represented by a unique smooth rational curve. Abusing the notation, we denote these rational curves of 
${\rm R}$ by same symbol as their cohomology class. 
\par We have obtained fifteen disjoint rational curves on ${\rm R}$, denoted ${\rm E}_{ij}$ with $ 1 \leq i < j \leq 6$. All curves ${\rm E}_{ij}$ have 
self-intersection $-1$. By blowing down ${\rm E}_{ij}$, one obtains another copy of the projective plane $\mathbb{P}^2$. Denote by $q_{ij}$ 
the fifteen distinct points obtained by collapsing the exceptional curves. The push-forwards of the 
six curves:
$$ \hat{{\rm a}}_{4}, \ 
\hat{{\rm a}}_{7},  \  \hat{{\rm S}}, \ \hat{{\rm K}}, \ \hat{{\rm a}}_{2}, \ \hat{{\rm Q}}  $$ 
form a configuration $\mathcal{L}= \{ {\rm L}_1, {\rm L}_2, \cdots {\rm L}_6 \}$ of six lines in this projective plane, meeting 
at the fifteen points $q_{ij}$. The push-forward of $\hat{{\rm U}}_{7}$ 
is a conic passing through the five points $q_{13},q_{14}, q_{25}, q_{26}, q_{56}$ but this conic does not contain $q_{34}$. Therefore, 
the six-line configuration $\mathcal{L}$ is non-Kummer.  
\par A slight modification of the above arguments gives a proof for the case of a generic special polarized pair $({\rm X}, i)$. One obtains 
the fifteen disjoint rational curves ${\rm E}_{ij}$ in the same manner as above. Then one checks that the conic through 
$q_{13},q_{14}, q_{25}, q_{26}, q_{56}$ also contains $q_{34}$. This fact, in turn, implies the existence of a rational curve 
${\rm E}_{\varnothing}$ tangent to all the six lines of the configuration $\mathcal{L}$.   
\end{proof}


\begin{thebibliography}{15}



\bibitem{beauville}
A. Beauville,
\newblock Complex Algebraic Surfaces, Cambridge University Press, 1996.

\bibitem{clingher3}
A. Clingher and C. Doran,
\newblock On K3 Surfaces with Large Complex Structure.
\newblock {\em Adv. Math.} vol. 215, no. 2, 2007.

\bibitem{clingher4}
A. Clingher and C. Doran,
\newblock  Modular Invariants for Lattice Polarized K3 Surfaces.
\newblock {\em Mich. Math. J.} vol.55, no. 2, 2007.

\bibitem{clingher5}
A. Clingher and C. Doran,
\newblock  Lattice Polarized K3 Surfaces and Siegel Modular Forms.
\newblock {\em preprint} 2010 (available on {\tt arXiv.org}).

\bibitem{coray}
D. F. Coray and M.A. Tsfasman,
\newblock Arithmetic on Singular Del Pezzo Surfaces.
\newblock {\em Proc. London Math. Soc.} no. 3, vol. 57, 1988.


\bibitem{demazure3}
M. Demazure,
\newblock Surfaces de Del Pezzo (III).
\newblock Seminaire Sur Les Singularites Des Surfaces,
\newblock {\em Lecture Notes in Math.} vol. 777, Springer Verlag, 1980.

\bibitem{dolgachev1}
I.V. Dolgachev,
\newblock Mirror Symmetry for Lattice Polarized K3 Surfaces.
\newblock {\em J. Math. Sci.} New York 81(3), 1996.


\bibitem{galluzzi}
F. Galluzzi and G. Lombardo,
\newblock Correspondences Between K3 Surfaces.
\newblock {\em Mich. Math. J.} vol.52, no. 2, 2004.

\bibitem{sarti1}
B. van Geemen and A. Sarti,
\newblock  Nikulin Involutions on K3 Surfaces.
\newblock {\em Math. Zeitschrift}, vol. 255, 2007.

\bibitem{geementop}
B. van Geemen and J. Top,
\newblock  An Isogeny of K3 Surfaces.
\newblock {\em Bull. London Math. Society}, vol. 38, no. 2, 2006.




\bibitem{inose}
H. Inose,
\newblock Defining Equations of Singular K3 Surfaces and a Notion of Isogeny.
\newblock {\em Proceedings of the International Symposium on Algebraic Geometry}, Kyoto University, 1977.

\bibitem{shiodainose}
H. Inose and T. Shioda,
\newblock On Singular K3 Surfaces.
\newblock {\em Complex Analysis and Algebraic Geometry: Papers Dedicated to K. Kodaira.} Iwanami Shoten and Cambridge University Press 1977.

\bibitem{kondo}
S. Kondo,
\newblock Automorphisms of Algebraic K3 Surfaces Which Act Trivially on Picard Groups.
\newblock {\em J. Math. Soc. Japan}, 44(1), 1992.

\bibitem{kugasatake}
M. Kuga and I. Satake.
\newblock Abelian Varieties Attached to Polarized K3 Surfaces.
\newblock {\em Math. Annalen}, vol. 169, 1967.

\bibitem{kumar}
A. Kumar,
\newblock K3 Surfaces Associated with Curves of Genus Two.
\newblock {\em Int. Math. Res. Notices}, vol. 6, 2008.

%
%
%
%
%
\bibitem{afsaneh}
A. Mehran,
\newblock Even Eight on a Kummer Surface.
\newblock {\em PhD Thesis}, University of Michigan, 2006.

\bibitem{morrison}
D. Morrison,
\newblock On K3 Surfaces with Large Picard Number.
\newblock {\em Invent. Math.} vol. 75, 1984.

%

\bibitem{nikulin2}
V. Nikulin,
\newblock Finite Automorphism Groups of K\"{a}hler K3 Surfaces.
\newblock {\em Trans. Moscow Math. Soc.} vol. 2, 1980.

\bibitem{shapiro}
I.I. Pjatecki\u{i}-\v{S}apiro and I.R. \v{S}afarevi\v{c},
\newblock A Torelli Theorem for Algebraic Surfaces of Type K3.
\newblock {\em Math. USSR Izv.} vol. 35, 1971.

\bibitem{ramanujam}
C. P. Ramanujam,
\newblock Remarks on the Kodaira Vanishing Theorem.
\newblock {\em Indian J. Math.} vol. 36, 1972.

\bibitem{shioda2}
T. Shioda,
\newblock On the Mordell-Weil Lattices.
\newblock {\em Comment. Math. Univ. St. Paul}, 39, 1990.

\bibitem{shioda}
T. Shioda,
\newblock Kummer Sandwich Theorem of Certain Elliptic K3 Surfaces.
\newblock {\em Proc. Japan Acad.} vol. 82, 2006.

\end{thebibliography}
\end{document}